\theoremstyle{plain}
\newtheorem{thm}{Theorem}[section]
\newtheorem{lem}[thm]{Lemma}
\newtheorem{prop}[thm]{Proposition}
\theoremstyle{remark}
\newtheorem{rem}[thm]{Remark}
\newtheorem{defn}[thm]{Definition}
\newtheorem{ex}[thm]{Example}
\DeclareMathOperator{\var}{var}
\newcommand{\E}{\mathbb{E}}
\newcommand{\V}{\mathbb{V}}
\newcommand{\cE}{\mathcal{E}}
\newcommand{\cH}{\mathcal{H}}
\renewcommand{\P}{\mathbb{P}}
\newcommand{\R}{\mathbb{R}}
\newcommand{\cY}{\mathcal{Y}}
\begin{document}
\begin{frontmatter}
\title{Learning latent tree models with small query complexity}
\runtitle{Latent tree models with small query complexity}

\begin{aug}
\author[A]{\inits{F.}\fnms{Luc}~\snm{Devroye}\ead[label=e1]{lucdevroye@gmail.com}}
\author[B,C]{\inits{S.}\fnms{G\'{a}bor}~\snm{Lugosi}\ead[label=e2]{gabor.lugosi@upf.edu}}
\author[B]{\inits{T.}\fnms{Piotr}~\snm{Zwiernik}\ead[label=e3]{piotr.zwiernik@utoronto.ca}}
\address[A]{School of Computer Science, McGill University, Montreal, Canada\printead{e1}}

\address[B]{Department of Economics and Business,
Pompeu  Fabra University, Barcelona, Spain;\\
Barcelona School of Economics\printead{e2,e3}}

\address[C]{Department,
ICREA, Pg. Llu\'{i}s Companys 23, 08010 Barcelona, Spain\printead{}}


\end{aug}

\begin{abstract}
We consider the problem of structure recovery in a graphical model of a tree where some variables are latent. Specifically, we focus on the Gaussian case, which can be reformulated as a well-studied problem: recovering a semi-labeled tree from a distance metric. We { introduce randomized} procedures that achieve query complexity of optimal order. Additionally, we provide statistical analysis for scenarios where the tree distances are noisy. The Gaussian setting can be extended to other situations, including the binary case and non-paranormal distributions.\end{abstract}

\begin{keyword}
\kwd{Latent tree models}
\kwd{phylogenetics}
\kwd{query complexity}
\kwd{probabilistic graphical models}
\kwd{structure learning}
\kwd{phylogenetics}
\end{keyword}

\end{frontmatter}

\section{Introduction}

First discussed by Judea Pearl as tree-decomposable distributions to generalize star-decomposable distributions such as the latent class model \cite[Section 8.3]{pearl}, latent tree models are probabilistic graphical models defined on trees, where only a subset of variables is observed. 
\cite{zhang2004hierarchical,zhang2017latent,mourad2013survey} extended our theoretical understanding of these models. We refer to \cite{zwiernik2018latent} for more details and references.

Latent tree models have found wide applications in various fields. They are used in phylogenetic analysis, network tomography, computer vision, causal modelling, and data clustering. They also contain other well-known classes of models like hidden Markov models, the Brownian motion tree model, the Ising model on a tree, and many popular models used in phylogenetics. In generic high-dimensional problems, latent tree models can be useful in various ways. They share many computational advantages of observed tree models but are more expressible. Latent tree models have been used for hierarchical topic detection \citep{come2021hierarchical} and clustering.

In phylogenetics, latent tree models have been used to reconstruct the tree of life from the genetic material of surviving species. They have also been used in bioinformatics and computer vision. Machine-learning methods for models with latent variables attract substantial attention from the research community. Some other applications include latent tree models and novel algorithms for high-dimensional data (\cite{chen2019novel}), and the design of low-rank tensor completion methods (\cite{zhang2022low}).

\subsection*{Structure learning}

The problem of structure or parameter learning for latent tree models has been extensively studied. A seminal work in this field is by \cite{choi2011learning}, where two consistent and computationally efficient algorithms for learning latent trees were proposed. The main idea is to use the link between a broad class of latent tree models and tree metrics studied extensively in phylogenetics, a link first established by \cite{pearl} for binary and Gaussian distributions.  This has been extended to symmetric discrete distributions by \cite{choi2011learning}. The proof in \cite[Section 2.3]{zwiernik2018latent} makes it clear that the only essential assumption is that the conditional expectation of every node given its neighbour is a linear function of the neighbour. 

From the statistical perspective, testing the corresponding algebraic restrictions on the correlation matrix was studied by \cite{shiers2016correlation}.  \cite{sturma2022testing} revisited this problem.  For the machine learning perspective, see \cite{jaffe2021spectral,zhou2020learning,anandkumar2011spectral,huang2020guaranteed,aizenbud2021spectral,kandiros2023learning}. Most of these papers build on the idea of local recursive grouping as proposed by \cite{choi2011learning}.  In particular, they all start by computing all distances between all observed nodes in the tree. 

\subsection*{Our contributions} Our work is motivated by novel applications where the dimension $n$ is so large that computing all the distances---or all correlations between observed variables---is impossible. We propose a randomized algorithm that queries the distance oracle and show that the expected query time for our algorithm is $O(\Delta n \log_\Delta(n))$, where $n$ is the number of observed variables and $\Delta$ is the maximal degree of the underlying tree. A special case of our problem is the problem of phylogenetic tree recovery. In this case, our approach resembles other phylogenetic tree recovery methods that try to minimize query complexity (see  \cite{afshar2020reconstructing} for references) and seems to be the first such method that is asymptotically optimal; see  \cite{zhang2003complexity} for the matching lower bound. More importantly, our algorithm can deal with general latent tree models and in this context, it is again the first such algorithm.  As we show in the last section, our algorithm can be easily adjusted to the case of noisy oracles, which is relevant in statistical practice.

\section{Preliminaries and basic results}

\subsection{Trees and semi-labeled trees}
A \textit{tree} $T=(V, E)$ is a connected undirected graph with no cycles. In particular, for any two $u,v\in V$ there is a unique path between them, which we denote by $\overline{uv}$. A vertex of $T$ with only one neighbour is called a \textit{leaf}. A vertex of $T$ that is not a leaf is called an \textit{inner vertex} or \textit{internal node}. An edge of $T$ is \textit{inner} if both ends are inner vertices; otherwise, it is called \textit{terminal}.  A connected subgraph of $T$ is called a \textit{subtree} of $T$. A rooted tree $(T,\rho)$ is simply a tree $T=(V,E)$ with one distinguished vertex $\rho\in V$.

A tree $T$ is called a semi-labeled tree with labeled nodes $W\subseteq V$ if every vertex of $T$ of degree $\leq 2$ lies in $W$\footnote{This differs slightly from the regular definition of semi-labeled trees (or X-trees) in phylogenetics, where regular nodes can get multiple labels; see \cite{semple2003phylogenetics}.}. We say that $T$ is a phylogenetic tree if $T$ has no degree-$2$ nodes and $W$ is exactly the set of leaves of $T$.   If $v\in W$, then we say that $v$ is \textit{regular} and we depict it by a solid vertex. If $|W|=n$ then we typically label the vertices in $W$ with $[n]=\{1,\ldots,n\}$. A vertex that is not labeled is called \textit{latent}. An example semi-labeled tree is shown in Figure \ref{fig:SemiLabTree1}.

\begin{figure}[htp!]
\centering
\includegraphics{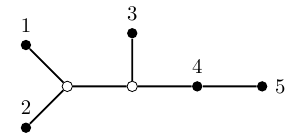}
\caption{A semi-labeled tree with five regular nodes labeled with $\{1,2,3,4,5\}$ and two latent nodes.}\label{fig:SemiLabTree1}
\end{figure}

By definition, all latent nodes are internal and have degree $\geq 3$. Therefore, if $|W|=n$, then $|T| \leq 2n+1$.

\subsection{Tree metrics}

Consider metrics on the discrete set $W$ induced by semi-labeled trees in the following sense. Let $T=(V, E)$ be a tree and suppose that $\ell:E\rightarrow \R_{+}$ is a map that assigns positive lengths to the edges of $T$. For any pair $u,v\in V$ by $\overline{uv}$ we denote the path in $T$ joining $u$ and $v$. We now define the map $d_{T,\ell}:V\times V\rightarrow \R$ by setting, for all $u,v\in V$,
$$
d_{T,\ell}(u,v)=\left\{\begin{array}{ll}
\sum_{e\in \overline{uv}} \ell(e),&\mbox{ if } u\neq v,\\
0, & \mbox{otherwise.}
\end{array}\right.
$$ 
Suppose now we are interested only in the distances between the regular vertices. 
\begin{defn}\label{def:treemetric}
A function $d: \,[n]\times [n]\rightarrow\R$ is called a \textit{tree metric} if there exists a semi-labeled tree $T=(V,E)$ with $n$ regular nodes $W$ and a (strictly) positive length assignment $\ell: E\rightarrow \R_{+}$ such that for all $i,j\in W$
$$
d(i,j)\;=\;d_{T,\ell}(i,j).$$ 
\end{defn}
\begin{ex}\label{ex:quartetMetric}Consider a quartet tree\index{tree!quartet} with edge lengths as indicated on the left in Figure~\ref{fig:quartetMetric}. The distance between vertices $1$ and $3$ is $d({1,3})=2+5+2.5=9.5$ and the whole distance matrix is given on the right in Figure~\ref{fig:quartetMetric}, where the dots indicate that this matrix is symmetric. 
\end{ex}
\begin{figure}[htp!]
\begin{minipage}{8cm}
	\includegraphics{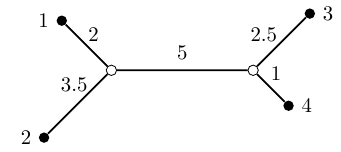}\end{minipage}
  \begin{minipage}{4cm}
  	$\begin{bmatrix}
0 & 5.5 & 9.5 & 8\\
\cdot & 0 & 11 & 9.5\\
\cdot & \cdot & 0 & 3.5\\
\cdot & \cdot & \cdot & 0
\end{bmatrix},$
  \end{minipage}
\caption{A metric on a quartet tree.}\label{fig:quartetMetric}
\end{figure}

It is easy to describe the set of all possible tree metrics.
\begin{defn}\label{def:4pointcond}
We say that a map ${d}:\, [n]\times [n]\to \R$ satisfies the \textit{four-point condition}\index{four-point condition} if for every four (not necessarily distinct) elements $i,j,k,l\in [n]$,
$$
{d}({i,j})+{d}({k,l})\quad\leq\quad \max\left\{\begin{array}{l}
{d}({i,k})+{d}({j,l})\\
{d}({i,l})+{d}({j,k}).
\end{array}\right.
$$
\end{defn}
Since the elements $i,j,k,l\in [n]$ in Definition \ref{def:4pointcond} need not be distinct, every such map is a metric on $[n]$ given that ${d}({i,i})=0$ and ${d}({i,j})={d}({j,i})$ for all $i,j\in [n]$. The following fundamental theorem links tree metrics with the four-point condition.

\begin{thm}[Tree-metric theorem, \cite{buneman1974note}]\label{th:3metric}
Suppose that \mbox{${d}:\, [n]\times [n]\to \R$} is such that ${d}({i,i})=0$ and ${d}({i,j})=d({j,i})$ for all $i,j\in [n]$. Then, ${d}$ is a tree metric on $[n]$ if and only if it satisfies the four-point condition. Moreover, a tree metric uniquely determines the defining semi-labeled tree and edge lengths.\end{thm}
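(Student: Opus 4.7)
The plan is to prove both implications separately, with induction on $n=|W|$ for the converse carrying the bulk of the work.

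For the forward direction I would consider any four regular nodes $i,j,k,l$ and analyze the subtree $T_{ijkl}\subseteq T$ that they span. Up to relabelling, $T_{ijkl}$ is either a quartet with a single internal path of length $\lambda>0$ separating $\{i,j\}$ from $\{k,l\}$, or a star in which that separating path has length zero. A direct computation on the five pendant/internal edges yields the identity $d(i,k)+d(j,l)=d(i,l)+d(j,k)=d(i,j)+d(k,l)+2\lambda$, and inspecting the three possible pairings of the four nodes shows that the four-point condition holds for every relabelling.

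For the converse I would induct on $n$. The base cases $n\leq 3$ admit explicit constructions; for $n=3$ one takes a tripod whose three pendant lengths are the Gromov products $g_i=\tfrac{1}{2}(d(i,j)+d(i,k)-d(j,k))$, which are nonnegative because setting $l=k$ in the four-point condition recovers the triangle inequality. For the inductive step, I would remove leaf $n$, apply the induction hypothesis to the restriction of $d$ to $W\setminus\{n\}$ to obtain a semi-labelled tree $(T',\ell')$, and then attach $n$ to $T'$. To locate the foot $p$ of $n$ on $T'$ I would use Gromov products at various triples: for any two leaves $a,b\in W\setminus\{n\}$, the quantities $\tfrac{1}{2}(d(n,a)+d(a,b)-d(n,b))$ and $\tfrac{1}{2}(d(n,a)+d(n,b)-d(a,b))$ fix the projection of $p$ on $\overline{ab}$ and the sum $d_{T'}(p,\overline{ab})+\ell(n,p)$. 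Varying $b$ and invoking the four-point condition on quadruples containing $n$ singles out the unique edge of $T'$ (or existing internal vertex) on which $p$ lies and pins down $\ell(n,p)$ on its own. I would then subdivide that edge if necessary, attach $n$ by a pendant edge of length $\ell(n,p)$, and verify $d(n,k)=\ell(n,p)+d_{T'}(p,k)$ for every $k\in W\setminus\{n\}$.

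The main obstacle is the consistency of the attachment: the location of $p$ and the value $\ell(n,p)$ must be independent of the reference pair $(a,b)$, and any new edge lengths produced by subdivision must be strictly positive. Both facts follow from systematic use of the four-point condition on quadruples $(n,a,b,c)$ combined with the known tree-metric structure on $T'$, but care is needed to cover the borderline cases in which $p$ coincides with an existing vertex of $T'$. For uniqueness, I would observe that the quartet topology on each four-element subset of $W$ is forced by the strict comparison among the three sums $d(i,j)+d(k,l)$, $d(i,k)+d(j,l)$, $d(i,l)+d(j,k)$ via the forward-direction identity, which reconstructs the combinatorial structure of $T$; the edge lengths are then determined from leaf distances as appropriate linear combinations (e.g.\ Gromov products along pendant edges), so any two semi-labelled tree realizations are isomorphic with matching lengths.
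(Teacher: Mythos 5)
The paper itself gives no proof of this statement: it is quoted as the classical tree-metric theorem with a citation to Buneman (1974), so there is no in-paper argument to compare against. Judged on its own, your outline is the standard one (quartet identity for the forward direction, leaf-attachment induction via Gromov products for the converse, quartet topologies plus linear relations for uniqueness), and for phylogenetic trees --- all labels at leaves --- it would go through.

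The genuine gap is that you have not adapted the argument to the paper's notion of \emph{semi-labelled} tree, where regular (labelled) nodes may be internal vertices of degree $2$ or more and all edge lengths must be strictly positive. In your inductive step you ``attach $n$ by a pendant edge of length $\ell(n,p)$'', but the Gromov product $\ell(n,p)=\tfrac12(d(n,a)+d(n,b)-d(a,b))$ can legitimately be $0$ (precisely when $n$ lies \emph{on} the path $\overline{ab}$ in the target tree); a pendant edge of length $0$ is forbidden by Definition~\ref{def:treemetric}, so in that case you must instead place the label $n$ at the point $p$ itself --- subdividing an edge of $T'$ to create a labelled degree-$2$ vertex, or labelling an existing latent vertex --- and check that the resulting tree is still semi-labelled (no unlabelled vertex of degree $\le 2$ is created). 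Relatedly, in the forward direction the four points need not be leaves of the subtree they span, so your case analysis should explicitly allow pendant lengths $0$ and non-distinct points, with $\lambda\ge 0$ throughout. Finally, your uniqueness argument via quartet topologies alone is not sufficient in this setting: quartet splits cannot distinguish a label sitting in the interior of a path from one hanging off it by a short edge; you also need the collinearity relations $d(i,k)+d(k,j)=d(i,j)$ (detectable from $d$) to decide which labelled nodes lie on which paths, after which the combinatorial tree and the edge lengths are pinned down. These are fixable defects, but as written the construction and the uniqueness step do not cover the semi-labelled case the theorem is stated for.
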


\noindent Note that the assumption about strictly positive lengths of each edge in Definition~\ref{def:treemetric} is crucial for uniqueness in Theorem~\ref{th:3metric}.

\subsection{Recovering a tree from a tree metric}

Our problem is to recover the semi-labeled tree $T$  based on a few queries of the distance matrix $D=[d(u,v)]$, which contains the distances between the regular nodes of $T$ (\cite{Abdi1990, Warnow1999, WCT1999, Fel2004}). 
We consider the \textit{query complexity}, which measures the number
of queries of the distance matrix, also called queries of the distance oracle.  Trivially, we can
reconstruct the tree with query complexity ${n \choose 2}$
(\cite{BPS1990, Boesch1968, CR1989, Gusfield1997, HY1964, waterman1977additive, NWRE2005}).

To provide a more refined analysis of the query complexity, we introduce the maximum degree of the tree:
$$\Delta\;:=\;\max_{i\in V}{\rm degree}(i).$$ When the query complexity is jointly measured in terms
of $n$ and $\Delta$, a lower bound for both worst-case and
expected query complexity is $\Omega (\Delta n \log_\Delta (n))$ 
(\cite{zhang2003complexity}).  For another proof of the lower bound for the expected complexity, see \cite{bastide2024}.

When $T$ is a phylogenetic tree ($W$ is the set of leaves of $T$), the distance queries are sometimes referred to as ``additive queries'' (\cite{waterman1977additive}). This case has been extensively studied in the literature; see \cite{Jan2016} for a recent overview. When the maximum degree $\Delta$ is bounded, \cite{hein1989optimal} demonstrated that the problem can be solved using $O(n \log(n))$ distance queries. When $\Delta$ is unbounded, \cite{CR1989} proposed an algorithm that was claimed to achieve a query complexity matching the lower bound for trees where all edge weights are equal to $1$. However, as noted by \cite{reyzin2007longest}, the algorithm's actual runtime is $O(n^{3/2} \sqrt{\Delta})$.

Complementing this work on phylogenetic recovery, \cite{kannan1996determining} proposed an algorithm with $O(\Delta n \log(n))$ query complexity for the noisy-ultrametric model, a special computational framework that is not directly related to ``additive queries''. In the same computational model, \cite{brodal2001complexity} presented an algorithm matching the lower bound of $O(\Delta n \log_\Delta(n))$; see also \cite{kao1999balanced} for related results. For additional references on this and other specialized computational models, we refer to \cite{afshar2020reconstructing}. Further discussions and generalizations of phylogenetic tree reconstruction can be found in \cite{GMS2012, DMR2009, DMS2006, Bun1971, Csuros2002, GJLO1999}.

Our objective is to develop a randomized algorithm with expected query complexity $O(\Delta n \log_\Delta (n))$ for the general semi-labeled tree reconstruction problem, regardless of how $\Delta$ varies with $n$. Similar to the seminal work of \cite{hein1989optimal}, we focus on pairwise distance queries. This problem generalizes the phylogenetic tree reconstruction problem, as it allows internal non-latent nodes to have degree two, and regular nodes are not necessarily leaves. Consequently, methods designed specifically for phylogenetic tree reconstruction are no longer directly applicable. 

\section{The new algorithm}

The proposed algorithm uses a randomized version of divide-and-conquer.
We will use the notion of a \textit{bag} $B \subseteq V$. 
{ The algorithm maintains a queue, consisting of sets of bags. Initially, there is only one bag, containing all regular nodes, that is, $B=W$.
The procedure takes a bag $B$.
}
One node in this set, denoted by $\rho(B)$, is marked as a
representative, which can be thought of as a root. 
A set of edges that jointly form a tree is called a \textit{skeleton} and is typically denoted by the mnemonic $S$.  Our algorithm starts with an empty skeleton and incrementally constructs the skeleton of the sought tree,
which we call the tree \textit{induced} by $B$.

\begin{algorithm}[H]\label{alg:tree}
\caption{Outline of our algorithm}
Let $\kappa = \Delta $ \; 
Pick a node $u \in W$, and set $\rho(W) \leftarrow u$ (note: $W$ is now a bag)\;
Make an empty queue $Q$ \;
Add $W$ to $Q$ \;
Set $S \leftarrow \emptyset$ \;
\While{$|Q|>0$}
{
Remove bag $B$ from the front of $Q$ \;
\If{$|B| \le \kappa$}{
Query all ${|B| \choose 2}$ distances between nodes in $B$\;
Find $S^*$, the full skeleton for the tree induced by $B$ \;
$ S \leftarrow S \cup S^*$\; 
}
\Else{
Apply procedure \textsc{bigsplit} $(B)$.
This procedure outputs a skeleton $S^*$ (connecting nodes from $B$ and possibly latent nodes)
and bags $B_1,\ldots,B_k$ where $B_i$ overlaps with the nodes of the skeleton in $\rho(B_i)$ only,
and are non-overlapping otherwise (i.e., $(B_i \setminus \rho(B_i)) \cap (B_j \setminus \rho(B_j))  = \emptyset$) \;
$ S \leftarrow S \cup S^*$ \;
Add $B_1,\ldots,B_k$ to the rear of $Q$ \; }}
Return the skeleton $S$ \;
\end{algorithm}

The procedure \textsc{bigsplit} takes a bag $B$ and a random set
of nodes in it, $u_1,\ldots, u_\kappa $, and forms the subtree that connects $u_1,\ldots, u_\kappa $ and $\rho(B)$. The edges of this subtree give the skeleton that is the output. The remaining nodes of $B$ are collected in bags that ``hang'' from the skeleton. The representatives of these bags are precisely those nodes where the bags overlap the skeleton.  Note that the skeleton may contain latent nodes not originally in $B$. Within \textsc{bigsplit}, all representatives of the hanging bags have their distances to all nodes in their bags queried, so for all practical purposes, the newly discovered latent nodes act as regular nodes.
The bags become smaller as the algorithm proceeds, which leads
to a logarithmic number of rounds. 
{The main result of the paper is the following 
theorem, whose proof is given in Section~\ref{sec:analysis} below.}

\begin{thm}\label{th:main}
Given a distance oracle $D$ between the regular vertices of a semi-labeled tree $T$, Algorithm~\ref{alg:tree} with parameter $\kappa = \Delta$ correctly recovers the induced tree with expected query complexity $O(\Delta n\log_\Delta (n))$.
\end{thm}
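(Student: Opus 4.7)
The plan is to split the argument into correctness and expected query complexity.

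For correctness, I would induct on $|B|$. When $|B|\le \kappa$, all pairwise distances are queried and Theorem~\ref{th:3metric} reconstructs the subtree induced by $B$ uniquely. When $|B|>\kappa$, \textsc{bigsplit} queries the $\binom{\kappa+1}{2}$ distances among $\{\rho(B),u_1,\ldots,u_\kappa\}$ and reconstructs their spanning Steiner subtree $S^*$ (possibly containing latent nodes) again via Theorem~\ref{th:3metric}. For each remaining $v\in B$, the $\kappa+1$ additional queries $d(v,\rho(B)),d(v,u_1),\ldots,d(v,u_\kappa)$ determine, via the four-point condition, the unique edge or vertex of $S^*$ from which $v$ hangs off. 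The leftover nodes are then grouped into child bags $B_1,\ldots,B_k$ by common attachment point, and that point is set as the representative. The inductive hypothesis applied to each $B_i$, together with the fact that $S^*$ and the subtrees recursively returned for the $B_i$'s cover every edge of the subtree of $T$ induced by $B$, completes the argument.

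For the query count, each call of \textsc{bigsplit} on a bag $B$ uses $\binom{\kappa+1}{2}+(|B|-\kappa-1)(\kappa+1)=O(\Delta|B|)$ queries, and the $|B|\le \kappa$ base case costs $\binom{|B|}{2}=O(\Delta|B|)$ as well. Since the algorithm uses a FIFO queue, I would group calls by BFS level: at every level, the bags form a partition of a subset of the regular nodes of $T$ (overlapping only at their representatives), so the total bag size per level is $O(n)$ and the per-level query cost is $O(\Delta n)$. The desired bound $O(\Delta n\log_\Delta n)$ thus reduces to showing that the expected number of BFS levels is $O(\log_\Delta n)$.

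The depth bound is the main obstacle and requires a probabilistic size-reduction argument. The key claim is that splitting a bag $B$ of size $m>\kappa$ using $\kappa=\Delta$ uniformly chosen representatives produces child bags whose maximum size is a factor of roughly $\Delta/\log\Delta$ smaller than $m$ in expectation. The intuition is that the random representatives act as separators of the subtree induced by $B$, and the hanging bags behave like the gaps between $\kappa+1$ random points, whose expected maximum is $O((m/\kappa)\log\kappa)$. Iterating, a bag of size $n$ shrinks below $\kappa$ within $\log(n/\kappa)/\log(\Delta/\log\Delta)=O(\log_\Delta n)$ rounds in expectation. The difficulty is that sibling bag sizes are coupled through the geometry of $T$ (specifically, the placement of Steiner points of the skeleton), so they are not literally order statistics of i.i.d.\ uniforms; I would handle this by a stochastic-domination argument reducing to an idealized worst case (for example, random cuts on a caterpillar backbone), and then taking a uniform bound over tree topologies. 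Combining the $O(\log_\Delta n)$ expected depth with the $O(\Delta n)$ per-level cost yields the claimed expected query complexity.
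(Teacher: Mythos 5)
Your correctness sketch and your per-level accounting (bags at a given level are disjoint except for representatives, so each level costs $O(\Delta n)$ queries) are sound and essentially match the paper, even though your description of \textsc{bigsplit} (query all pairs among the sampled nodes, then attach each remaining node with $\kappa+1$ queries) differs from the actual procedure, which iterates \textsc{basic} and \textsc{explode}. The genuine gap is exactly where you flag it: the depth bound. Your plan --- show the expected maximum child-bag size shrinks by a factor of roughly $\Delta/\log\Delta$ and then ``iterate'' to get $O(\log_\Delta n)$ expected rounds --- is not a proof. An in-expectation shrinkage factor per split does not by itself control the expected number of rounds or the expected total work: the number of rounds is determined by the \emph{worst} bag along each root-to-leaf path of the recursion, the bag sizes at a level are dependent, and there are randomly many of them, so you would need per-round tail bounds (uniform over the current collection of bags) or a moment/supermartingale recursion, none of which your sketch supplies. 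Your proposed fix --- stochastic domination by an idealized caterpillar and a uniform bound over tree topologies --- is both unjustified as stated and unnecessary.

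The paper avoids this difficulty by an algorithmic device you have overlooked: \textsc{bigsplit} (Algorithm~\ref{alg:bigsplit}) contains a while loop that redraws and re-splits until the largest output bag has size at most $|B|/\sqrt{\Delta}$. Lemma~\ref{lem:main} shows a single attempt succeeds with probability at least $1/2$: listing the regular nodes of the tree induced by $B$ in \textsc{dfs} order, every output bag is contained in one of the $\kappa+1$ spacings determined by $\rho(B),u_1,\ldots,u_\kappa$ plus one skeleton node, and the spacings of uniform sampling without replacement have exponential tails, so a union bound gives $\P\{M\ge 1+|B|/\sqrt{\Delta}\}<1/2$ for any topology --- no worst-case reduction needed. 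Consequently the per-round shrinkage is \emph{deterministic}, the number of rounds is at most $2\log_\Delta n$, and the cost of splitting each bag is its one-shot cost times an independent geometric$(1/2)$ variable, whose expectation is $2$; summing over bags and rounds gives the $O(\Delta n\log_\Delta n)$ bound, as in \eqref{eq:bagsplit}--\eqref{eq:compbound}. If you insist on analyzing a one-shot split per round (no retry loop), you must prove a high-probability, not merely in-expectation, shrinkage statement and then handle the resulting random depth carefully; as written, your argument does not establish the theorem for the algorithm in the paper.
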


{The procedure \textsc{bigsplit} uses two sub-operations, called \textsc{basic} and \textsc{explode} that we describe next.
}

\subsection{The ``basic'' operation}\label{sec:basic}

Let $B$ be a bag with representative $\rho = \rho(B)$,
and let $\alpha \in B$ be a distinct regular vertex.
In our basic step, we query $d(v,\alpha)$ for all $v\in B$, and set
\begin{equation}\label{eq:Dv}
D(v) = d(v,\alpha) - d(v,\rho).
\end{equation}
We group all nodes $v$ according to the different values $D(v)$
that are observed.
Ordering the sets in this partition of $B$ from small to large
value of $D(\cdot)$, we obtain bags $B_1,\ldots, B_k$.  It is
clear that $\rho \in B_k$ and $\alpha \in B_1$. 
Within each $B_i$, we let $u_i$ be the node of $B_i$
closest to $\alpha$. If 
$$
d(u_i, \alpha) + d(u_i, \rho) = d(\alpha, \rho),
$$
then $u_i$ (a regular node) is on the path from $\alpha$ to $\rho$
in the induced tree.  We set $\rho(B_i) = u_i$.
If, however, 
$$
d(u_i, \alpha) + d(u_i, \rho) > d(\alpha, \rho),
$$
then we know that there must be a latent node $w_i$
that connects the $(\alpha, \rho)$ path to the nodes in $B_i$.
In fact, for all $v \in B_i$, we have
\begin{equation}\label{eq:newreg}
d(v, w_i) = \frac{1}{2} \left( d(v,\alpha) + d(v, \rho) - d(\alpha,\rho) \right).
\end{equation}
These values can be stored for further use.
So, we add $w_i$ to $B_i$ and define $\rho(B_i) = w_i$.
In this manner, we have identified $S^*$, the part of the final skeleton
that connects $\alpha$ with $\rho$:
$$
(\rho (B_1), \rho(B_2)),
\ldots,
(\rho (B_{k-1}), \rho(B_k)).
$$

\begin{figure}[hbt!]
\centering
\includegraphics[scale=.8]{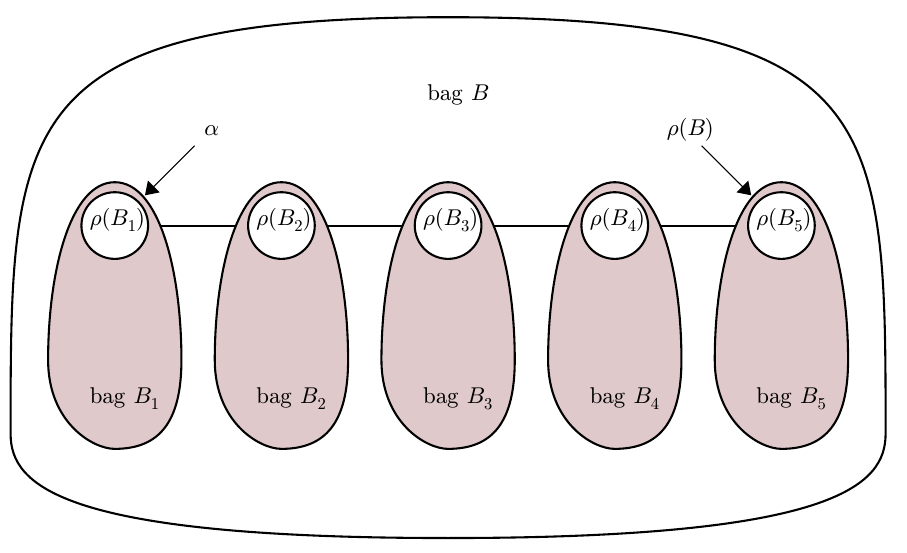}
\caption{In a basic operation, a bag $B$ with root $\rho(B)$
and query node $\alpha$ is decomposed into several smaller bags,
with root nodes on the skeleton.}
\label{fig:basic}
\end{figure}

\begin{algorithm}[H]\label{alg:basic}
Input: a bag $B$, and $\alpha\in B$\;
Set $\rho=\rho(B)$\;
\For{$v\in B$}{
Compute $D(v)$ in \eqref{eq:Dv}\; 
}
Assign all $v \in B$ to bags $B_1,\ldots, B_k$ according to decreasing values of $D(v)$\; 
\For{$i=1,\ldots,k$}{
$u_i=\arg\min_{v\in B_i} d(v,\alpha)$\;
\If{$d(\rho,u_i)+d(u_i,\alpha)=d(\rho,\alpha)$}{$\rho(B_i)=u_i$}
\Else{Identify latent node $w_i$\;
Add $w_i$ to $B_i$\;
Set $\rho(B_i)=w_i$\;
Update the distance oracle by calculating  $d(w_i,v)$ for all $v\in B_i$ using \eqref{eq:newreg}\;}
}
Return $B_1,\ldots,B_k$ and the skeleton $(\rho(B_1), \rho(B_2)), \ldots, (\rho(B_{k-1}), \rho(B_k))$\;
\caption{\textsc{basic}$(B,\alpha)$}
\end{algorithm}

The query complexity of Algorithm
\ref{alg:basic} is bounded by $2|B|-3$.

\subsection{The operation ``explode''}\label{sec:explode}

Another fundamental operation, called \textit{explode}, decomposes
a bag $B$ into smaller bags $B_1,\ldots,B_k$---all having the same representative $\rho (B)$---according to the different subtrees of $\rho(B)$ that are part of the tree induced by $B$.
For arbitrary nodes $u \not= v \in B$, $u,v \not= \rho(B)$,
we note that $u$ and $v$ are in the same subtree
if and only if
$$
d(u,v) < d(u, \rho(B)) + d(\rho(B),v).
$$
Thus, in query time $O(|B|)$, we can determine the nodes that are in the same subtree
as $u$. 
Therefore, we can partition all nodes of $B \setminus \{\rho(B)\}$ into disjoint subtrees of $\rho(B)$ (without constructing these trees yet)
in query time at most $|B| \Delta$ by peeling off
each set in the partition in turn.  These sets are output as bags
denoted by $B_1,\ldots, B_k$.

\begin{figure}[hbt!]
\centering
\includegraphics[scale=0.9]{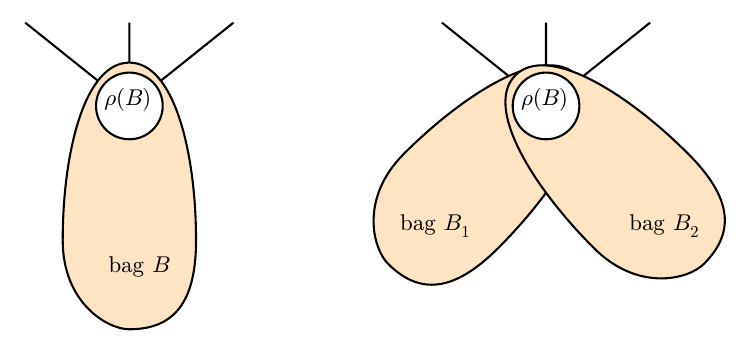}
\caption{In the explode operation, a bag $B$ with root $\rho(B)$
is decomposed into several smaller bags,
each with root $\rho(B)$. All other nodes of $B$ end up in only one
of the smaller bags.}
\label{fig:explode}
\end{figure}

\begin{algorithm}[H]\label{alg:explode}
Input: a bag $B$ \;
\While{$|B|>1$}
{Take $u \in B$, $u \not= \rho(B)$\;
Set $B^* = \{ v \in B, v\not=\rho(B): d(u,v) < d(u,\rho(B))+d(v,\rho(B)) \} \cup \{ \rho(B) \}$\;
Set $\rho(B^*) = \rho(B)$\;
Output $B^*$ \;
Set $B \leftarrow \{ \rho (B) \} \cup ( B \setminus B^* )$\;
}
\caption{\textsc{explode}$(B)$}
\end{algorithm}

The query complexity of Algorithm \ref{alg:explode} is bounded by $(|B| -1)\Delta$.

\subsection{The procedure bigsplit}\label{sec:round}

We are finally ready to provide the details of the procedure
\textsc{bigsplit}, which takes as input a bag $B$
and distinct nodes $u_1,\ldots,u_k$ 
{ in $B$} not equal to $\rho(B)$.

\begin{algorithm}[H]\label{alg:bigsplit}
\caption{\textsc{bigsplit}$(B)$}
Let $\mathcal{C} = \{ B \}$  \;
Let $S$ be a skeleton. Initially, $S = \emptyset$ \;
Let $M = \max_{B' \in \mathcal{C}} |B'|$ \;
\While{$M > |B|/\sqrt{\Delta}$}
{Sample uniformly at random and without replacement nodes $u_1,\ldots, u_\kappa $ from $B \setminus \{ \rho(B) \}$\; 
Let $\mathcal{B}$ be a collection of bags. Initially, $\mathcal{B} = \{ B \}$\;
Let $S$ be a skeleton. Initially, $S = \emptyset$ \;
\For{$i=1$ to $\kappa$}
{
Find the bag $B^*$ in $\mathcal B$ to which $u_i$ belongs \;
\If{$u_i \not= \rho(B^*)$}
{
Apply \textsc{basic}$(B^*, u_i)$, which outputs a skeleton $S^*$ and bags $B_1,\ldots,B_k$ \;
${\mathcal{B}} \leftarrow {\mathcal{B}} \setminus \{ B^* \}$\;
${\mathcal{B}} \leftarrow {\mathcal{B}} \cup \cup_{j=1}^k \{ B_j \}$\;
$S \leftarrow S \cup S^*$ \;
}
}
Let $\mathcal{C}$ be a collection of bags. Initially, $\mathcal{C}$ is empty\;
\For{all $B \in {\mathcal{B}}$}
 {
 \textsc{explode}$(B)$, which leaves output $B_1,\ldots,B_\ell$ \;
 Add $B_1,\ldots,B_\ell$ to $\mathcal{C}$\;
 }
 Let $M = \max_{B' \in \mathcal{C}} |B'|$ \;
 }
Output $S$ \;
Output all bags in $\mathcal{C}$ \;
\end{algorithm}

\begin{figure}[hbt!]
\centering
\includegraphics{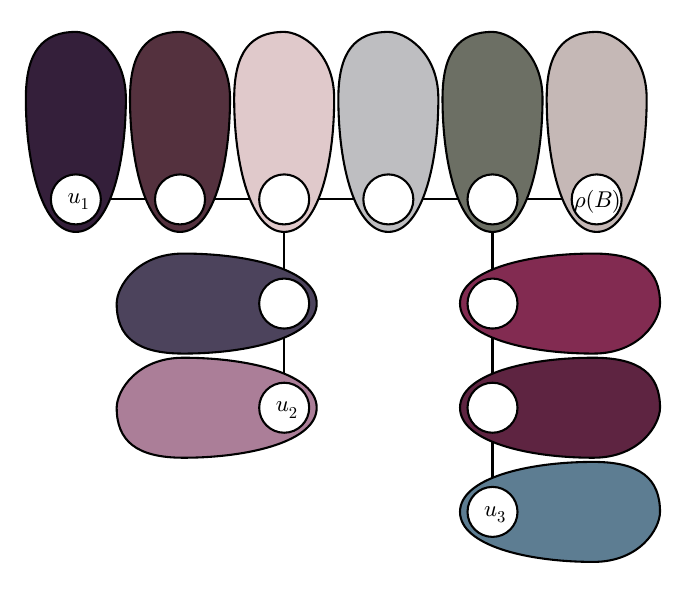}
\caption{Illustration of \textsc{bigsplit}$(B)$. Each bag anchored on the skeleton is further exploded into smaller bags (not shown).}
\label{fig:bigsplit}
\end{figure}

\section{The query complexity}\label{sec:analysis}

\subsection{Proof of Theorem~\ref{th:main}}

For a bag $B$, and nodes $u_1,\ldots,u_\kappa$ 
taken uniformly at random without replacement
{ from $B\setminus \{\rho(B)\}$},
let $M$ be the size of the largest bag output by 
\textsc{bigsplit} $(B)$,
where $|B| \ge \kappa+1$, and $\kappa = \Delta$.
Lemma~\ref{lem:main} below shows that
$$
\P \left\{ M > \frac{|B|}{\sqrt{\Delta}} \right\} \le \frac{1}{2}.
$$
The while loop in \textsc{bigsplit} is repeated until 
$M \le {|B|}/{\sqrt{\Delta}}$.
Thus, we can visualize the algorithm in rounds, starting with $W$.
In other words, in the $r$-th round, we apply \textsc{bigsplit}
to all bags that have been through a \textsc{bigsplit} $r-1$ times.
In every round, all bags of the previous round are reduced in size by
a factor of $1/\sqrt{\Delta}$. Therefore, there are 
$\le \log_{\sqrt{\Delta}} (n) = 2 \log_\Delta (n)$ rounds.
The query complexity of one \textsc{bigsplit} without 
the \textsc{explode} operations is at most
$(\kappa +1)(2|B|-1)$. The total query complexity due to all \textsc{explode} operations
is at most $\Delta (|B|-1)$, for a grand total bounded by
$$
\kappa+1+ (|B|-1) \times (2+ 2\kappa + \Delta).
$$
Then the (random) query complexity 
for splitting bag $B$ is bounded by
\begin{equation}\label{eq:bagsplit}
X(\kappa +1)+ X (|B|-1)  \times (2+ 2\kappa + \Delta)
= X(\Delta+1) + X (|B|-1) \times (2+3 \Delta)~,
\end{equation}
where $X$ is geometric $(1/2)$.
The expected value of this is $2(\Delta+1) + 2 (|B|-1)  (2+ 3 \Delta)$.
Summing over all bags $B$ that participate in one round
yields an expected value bound of $O(\Delta)$ times
the sum of $|B|-1$ over all participating $B$.  But the bags
do not overlap, except possibly for their representatives. Hence
the sum of all values $(|B|-1)$ is at most $n$, as each proper
item in a bag is a regular node. As $\kappa = \Delta$,
the expected cost of one round of splitting is at most
$2(\Delta+1) + n  (4+ 6 \Delta)$.

There is another component of the query complexity due to the part in which we construct the induced tree for a bag $B$ when $|B| \le \kappa$.  A bag $B$ dealt with in this manner is called final.  So the total query complexity becomes the sum of ${|B| \choose 2}$ computed over all final bags of size at least two. Let the sizes of the final bags be denoted by $n_i$.  Noting that $\sum_i (n_i -1) \le n$, we see that the query complexity due to the final bags is at most
\begin{equation}\label{eq:bagfinal}
\sum_i \frac{n_i (n_i -1)} {2} 
\le \max_i n_i \, \sum_i \frac{n_i -1} {2}
\le \frac{\kappa n}{2}
< \Delta n.
\end{equation}
The overall expected query complexity does not exceed
\begin{equation}\label{eq:compbound}
n\Delta + (\Delta + n  (4+ 6 \Delta) ) \times 2 \log_\Delta (n).    
\end{equation}
This finishes the proof of Theorem~\ref{th:main}. 

\hfill$\square$

\subsection{The main technical lemma}

\begin{lem}\label{lem:main}
For a bag $B$, and random nodes $u_1,\ldots,u_\kappa$ 
taken uniformly at random without replacement
{ from $B\setminus \{\rho(B\}$},
let $M$ be the size of the largest bag output by 
\textsc{bigsplit} $(B)$,
where $|B| \ge \kappa+1$, and $\kappa = \Delta$.
Then
$$
\P \left\{ M \ge 1+ \frac{|B|}{\sqrt{\Delta}} \right\} \le \frac{1}{2}.
$$
\end{lem}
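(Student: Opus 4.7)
The plan is to translate one round of \textsc{bigsplit} into a random-covering question on the tree $T'$ induced by $B$, rooted at $\rho=\rho(B)$. For each vertex $v$ of $T'$, write $T'_v$ for the subtree rooted at $v$ and set $X_v=\indic\{\{u_1,\ldots,u_\kappa\}\cap T'_v\neq\emptyset\}$.

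First I would identify the structure of the output. After the sequential \textsc{basic} calls with $u_1,\ldots,u_\kappa$ followed by \textsc{explode}, the constructed skeleton is exactly the Steiner tree $\mathcal{S}$ of $\{\rho,u_1,\ldots,u_\kappa\}$ inside $T'$, and the output bags are in bijection with the ``boundary'' edges $(\pa(c),c)$ for which $\pa(c)\in\mathcal{S}$ and $c\notin\mathcal{S}$; the bag attached to such an edge has size $1+|B\cap T'_c|$. Consequently $M\ge 1+|B|/\sqrt{\Delta}$ is equivalent to the existence of a boundary child $c$ with $|B\cap T'_c|\ge|B|/\sqrt{\Delta}$.

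Next I would introduce the ``heavy'' set $\mathcal{H}$ consisting of those vertices $v\in V(T')$ with $|B\cap T'_v|\ge|B|/\sqrt{\Delta}$. Since $T'_u\supseteq T'_v$ whenever $u$ is an ancestor of $v$, $\mathcal{H}$ is ancestor-closed and hence a subtree of $T'$ containing $\rho$. A short deterministic argument then reduces the event $\{M\ge 1+|B|/\sqrt{\Delta}\}$ to the existence of a leaf $\ell$ of $\mathcal{H}$ with $X_\ell=0$ (necessarily $\ell\neq\rho$, since $X_\rho=1$): from any witness $v\in\mathcal{H}$ with $X_v=0$, descending inside $\mathcal{H}$ reaches a leaf $\ell$ at which $X_\ell=0$ still holds, because $T'_\ell\subseteq T'_v$.

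Finally, a union bound will close the argument. Distinct leaves of $\mathcal{H}$ are pairwise incomparable in $T'$, so their subtrees are disjoint; since each contains at least $|B|/\sqrt{\Delta}$ elements of $B$, there are at most $\sqrt{\Delta}$ non-$\rho$ leaves. For such a leaf, with $s=|B\cap T'_\ell|\ge|B|/\sqrt{\Delta}$ and $n=|B|-1$, the hypergeometric estimate
\[
\P(X_\ell=0)=\frac{\binom{n-s}{\kappa}}{\binom{n}{\kappa}}\;\le\;\Bigl(1-\tfrac{s}{n}\Bigr)^{\Delta}\;\le\;\Bigl(1-\tfrac{1}{\sqrt{\Delta}}\Bigr)^{\Delta}\;\le\;e^{-\sqrt{\Delta}}
\]
together with a union bound and the elementary inequality $xe^{-x}\le e^{-1}$ will yield $\P\{M\ge 1+|B|/\sqrt{\Delta}\}\le\sqrt{\Delta}\,e^{-\sqrt{\Delta}}\le e^{-1}<1/2$. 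I expect the main obstacle to be the reduction to leaves of $\mathcal{H}$ in the previous paragraph: a direct union bound over all of $\mathcal{H}$ would be too weak, because $|\mathcal{H}|$ can be of order $|B|$ while the single-vertex bound $e^{-\sqrt{\Delta}}$ does not decay fast enough to compensate. Exploiting the monotonicity of $X$ along ancestor-descendant chains to push any witness down to a leaf, and then using disjointness of subtrees to cap the leaf count by $\sqrt{\Delta}$, is the key structural step.
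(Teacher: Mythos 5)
Your proposal is correct, and it takes a genuinely different route from the paper's proof. Both arguments rest on the same structural fact (each output bag is a subtree hanging off the Steiner tree of $\{\rho(B),u_1,\ldots,u_\kappa\}$, contributing its $B$-nodes plus one representative), but the counting differs. The paper lists the regular nodes in \textsc{dfs} order from $\rho(B)$, observes that the selected nodes cut this list into $\kappa+1$ spacings $N_0,\ldots,N_\kappa$ with each bag contained in one spacing plus possibly one separator, and then uses exchangeability of the spacings, the hypergeometric tail bound $\P\{N_0\ge k\}\le(1-k/(|B|-1))^\kappa$, and a union bound over the $\kappa+1$ spacings, ending with $(\Delta+1)e^{-(\Delta+1)/\sqrt{\Delta}}<1/2$ (checked via monotonicity for $\Delta\ge3$). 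You instead union-bound over the deterministic family of leaves of the ancestor-closed ``heavy'' set $\mathcal{H}$: disjointness caps their number at $\sqrt{\Delta}$, each is missed by the sample with probability at most $e^{-\sqrt{\Delta}}$, and $\sqrt{\Delta}e^{-\sqrt{\Delta}}\le e^{-1}<1/2$. What your version buys: the final bound is uniform in $\Delta\ge1$ with no case analysis, and it actually decays as $\Delta$ grows, so it would give a failure probability $o(1)$ per round rather than just $1/2$; the price is that you need the explicit identification of bags with boundary children of the Steiner tree and the reduction to heavy leaves, whereas the \textsc{dfs}-interval argument gets by with the weaker statement $M\le\max_i N_i+1$ and a purely one-dimensional spacing computation. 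One small wording fix: the output bags are not quite in \emph{bijection} with boundary edges (a boundary child $c$ with $B\cap T'_c=\emptyset$ produces no bag), but since you only use the forward implication ``large bag $\Rightarrow$ heavy boundary child with $X_c=0$'', this does not affect the argument.
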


\begin{figure}[hbt!]
\centering
\includegraphics[scale=.9]{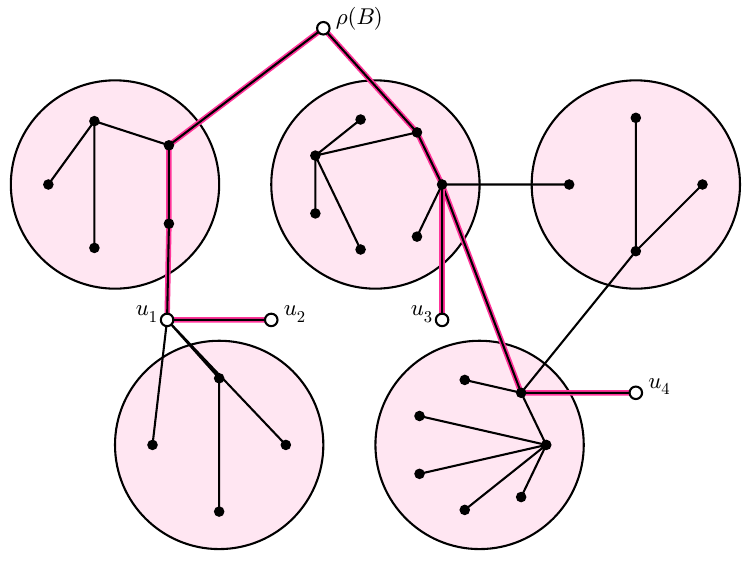}
\caption{Illustration of \textsc{bigsplit}$(B)$. The skeleton induced by $\{ \rho (B),u_1,u_2,u_3,u_4 \}$ is shown in thick lines. The remainder of the tree induced by $B$ is shown in thin lines. The nodes traversed by \textsc{dfs} between visits of $\{ \rho (B),u_1,u_2,u_3,u_4 \}$ are grouped.
}
\label{fig:proof}
\end{figure}

\begin{proof}
Consider $\rho(B)$ as the root of the tree induced by $B$, on which we perform \textsc{dfs} (depth-first-search). Note that each of the bags
left after \textsc{bigsplit} $(B, u_1,\ldots,u_\kappa)$ corresponds to a subtree with root on the skeleton output by \textsc{bigsplit} and
having root degree one.
We list the regular nodes in \textsc{dfs} order and separate this list into $\kappa+1$ sublists, all separated by $\rho(B), u_1, \ldots, u_\kappa$.
Call the sizes of the sublists $N_0, N_1, \ldots, N_\kappa$.
Note that the bags consist of regular nodes, except possibly
their representatives on the skeleton.  Each bag is either contained in a sublist or a sublist plus one of the nodes $\rho(B), u_1, \ldots, u_\kappa$.
Thus, 
$$
M \le \max_i N_i + 1.
$$
If we number the nodes by \textsc{dfs} order, starting at $\rho(B)$, then picking $k$ nodes uniformly at random without replacement from all nodes, $\rho(B)$ excepted, shows that
the $N_i$'s correspond to the cardinalities of the intervals defined by the selected nodes. Therefore,
$N_0,\ldots,N_\kappa$ are identically distributed.  In addition, for an arbitrary integer $k$,

\begin{align*}
  \P \left\{ N_0 \ge k \right\}
&= \frac{|B|-1-k}{|B|-1}  \times
\frac{|B|-2-k}{|B|-2} \times \cdots \times
\frac{|B|-\kappa-k}{|B|-\kappa} \\
&\le \left( 1 - \frac{k}{|B|-1} \right)^\kappa \\
&\le  \exp \left( - \frac{k \kappa} {|B|-1} \right).
\end{align*}
Thus, by the union bound,
\begin{align*}
  \P \left\{ M \ge 1+ \frac{|B|}{\sqrt{\Delta} } \right\} 
&\le  (\kappa + 1) \exp \left( - \frac{|B| \kappa} {(|B|-1) \sqrt{\Delta}} \right) \\
&\le  (\kappa + 1) \exp \left( - \frac{\kappa+1 } { \sqrt{\Delta}} \right) \\
&\qquad \textrm{(as $|B| \ge \kappa + 1$)} \\
&\le  (\Delta + 1) \exp \left( - \frac{\Delta+1 } { \sqrt{\Delta}} \right) \\
&\le 4 \exp \left( - \frac{4 } { \sqrt{3}} \right) \\
&\qquad \textrm{(as the expression decreases for $\Delta \ge 3$)} \\
&= 0.39728\ldots \\
&< \frac{1}{2}.
\end{align*}

\end{proof}

\subsection{More refined probabilistic bounds}

{ In this section we refine Theorem~\ref{th:main} by offering more detailed distributional estimates for the query complexity of Algorithm~\ref{alg:tree}.}

\begin{thm}\label{th:minor}
The query complexity $Z$ of Algorithm~\ref{alg:tree} with parameter $\kappa = \Delta$ has 
$$
\E \{ Z \} \;\le\; 19 \Delta n \log_\Delta (n).
$$
Furthermore, if $\log_\Delta (n) \to \infty$ as $n \to \infty$,
then $\P \{ Z \ge 2 \E \{Z\} \} = o(1)$.
Finally, if $\log_\Delta (n) = O(1)$, then
$Z/(\Delta n) 
{\stackrel{st.}{\le}}
6X + o_p(1)$, 
where ${\stackrel{st.}{\le}}$ denotes stochastic
domination, $X$ is a geometric $(1/2)$ random variable,
and $o_p(1)$ is a random variable tending to $0$ in probability.
\end{thm}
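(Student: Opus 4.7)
My plan has three steps matching the three assertions of the theorem.

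The first inequality is a refinement of constants in the proof of Theorem~\ref{th:main}. Expanding the bound \eqref{eq:compbound} gives $\E Z \le n\Delta + (2\Delta + 8n + 12\Delta n)\log_\Delta n$. Using $\Delta \ge 2$ to absorb $8n \le 4\Delta n$, and $n\Delta \le \Delta n \log_\Delta n$ (for $n \ge \Delta$; otherwise the problem is trivial), one reaches $\E Z \le 19\Delta n \log_\Delta n$.

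For the concentration claim in the regime $\log_\Delta n \to \infty$, decompose $Z = Z_{\text{final}} + \sum_{r \ge 1} Z_r$, where $Z_r$ is the cost of splitting bags in round $r$ (in the sense of the proof of Theorem~\ref{th:main}) and $Z_{\text{final}}$ is the cost of the final-phase processing, deterministically bounded by $n\Delta$ via \eqref{eq:bagfinal}. Let $\mathcal{F}_{r-1}$ denote the $\sigma$-field describing the bag collection $\mathcal{B}_r$ at the start of round $r$. From \eqref{eq:bagsplit},
\begin{equation*}
Z_r \;\le\; \Delta\,|\mathcal{B}_r| + (2+3\Delta)\sum_{B \in \mathcal{B}_r} X_B (|B|-1),
\end{equation*}
where the $X_B \sim \mathrm{Geometric}(1/2)$ are independent across bags given $\mathcal{F}_{r-1}$. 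Since $\var(X_B) = 2$ and $\sum_B(|B|-1)^2 \le (\max_B|B|)\cdot n$, and Lemma~\ref{lem:main} supplies $\max_{B\in\mathcal{B}_r}|B| \le n/\Delta^{(r-1)/2}$, one gets $\var(Z_r \mid \mathcal{F}_{r-1}) \le 2(2+3\Delta)^2 n^2 \Delta^{-(r-1)/2}$, geometrically decaying in $r$. Telescoping the total-variance identity across rounds yields $\var Z = O(\Delta^2 n^2)$; combined with $\E Z = \Theta(\Delta n \log_\Delta n) \to \infty$, Chebyshev then gives $\P\{Z \ge 2\E Z\} \le \var Z/(\E Z)^2 = O(1/(\log_\Delta n)^2) = o(1)$.

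For the stochastic dominance in the regime $\log_\Delta n = O(1)$, the algorithm runs in $R = O(1)$ rounds and, since $n \to \infty$, one must have $\Delta \to \infty$. Let $X \sim \mathrm{Geometric}(1/2)$ be the number of while-loop iterations in \textsc{bigsplit} called on the initial bag $W$. Then $Z_1 = \Delta + X(n-1)(2+3\Delta)$, which gives $Z_1/(\Delta n) = 3X + o_p(1)$ as $\Delta \to \infty$. For the remainder $Z - Z_1$, rerunning the variance estimate of the previous paragraph with $\max_B|B| \le n/\sqrt{\Delta}$ from round~2 onward shows $\var((Z-Z_1)/(\Delta n)) = O(1/\sqrt{\Delta}) \to 0$, so $(Z-Z_1)/(\Delta n) = C_n + o_p(1)$ for a bounded deterministic sequence $C_n$. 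Hence $Z/(\Delta n) = 3X + C_n + o_p(1)$, and the dominance by $6X + o_p(1)$ follows from the tail comparison $\P[3X + C_n > t] \le \P[6X > t]$, valid for $t \ge 2 C_n$, with the gap for smaller $t$ absorbed into the $o_p(1)$ random variable.

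The hardest step will be the bookkeeping needed in the last paragraph to justify the specific constant~$6$. A coarser variance argument immediately gives dominance by $CX + o_p(1)$ for some larger constant $C$, but reaching exactly $6$ requires combining the per-round balance $\sum_B(|B|-1) \le n$, the bound $R = O(1)$, and the shrinking variance in $\Delta$ in a careful simultaneous accounting over all rounds, instead of invoking the loose expectation bound from the first step.
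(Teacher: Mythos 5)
Your first two parts are essentially the paper's own argument. The constant $19$ comes from expanding \eqref{eq:compbound} exactly as you do, and your round-by-round conditional-variance computation is the same calculation the paper performs by conditioning on the bag structure of all rounds at once, giving $\V\{Z\}=O((\Delta n)^2)$ and then Chebyshev. Two small corrections: the bound $\max_B|B|\le n/\Delta^{(r-1)/2}$ is guaranteed deterministically by the termination condition of the while loop in \textsc{bigsplit}, not by Lemma~\ref{lem:main} (the lemma only controls the number of loop iterations, which is where the geometric variables $X_B$ come from); and your assertion $\E\{Z\}=\Theta(\Delta n\log_\Delta(n))$ uses the lower bound $\E\{Z\}=\Omega(\Delta n\log_\Delta(n))$, which is not something you can take for granted from the analysis itself --- the paper imports it from the query-complexity lower bound of \cite{zhang2003complexity}, and you should say so.

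The genuine gap is in the third part. You arrive at $Z/(\Delta n)=3X+C_n+o_p(1)$ with $C_n$ only known to be bounded, and then claim that the failure of the tail comparison $\P\{3X+C_n>t\}\le\P\{6X>t\}$ for $t<2C_n$ can be ``absorbed into the $o_p(1)$ random variable.'' It cannot: if $C_n\ge 3+\delta$ along a subsequence, then $\P\{3X+C_n>6+\delta\}=1$ while $\P\{6X+o_p(1)>6+\delta\}\to\P\{X\ge 2\}=1/2$, so no correction tending to $0$ in probability restores domination --- you would need an additive term of order $C_n$, a constant. Nor is $C_n\le 3$ something you can expect for free: in the regime $\Delta\ge n^{1/K}$ (say $\Delta=n^{1/3}$), essentially all nodes lie in bags of size exceeding $\Delta$ for several rounds, so the expected normalized cost of the later rounds plus the final bags can exceed $3$. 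Your closing paragraph concedes that the accounting behind the constant $6$ is not done, so the stochastic-domination claim is simply not established by the sketch. The paper proceeds differently at this point: it absorbs the entire first-round cost together with the $n\Delta$ final-bag term into $6\Delta nX$, using $n\Delta+\Delta+X(2+3\Delta)n\le 6\Delta nX$ since $X\ge 1$ and $\Delta\ge 2$ (so the factor $6$ is not ``$2\times 3X$'' covering a constant, it already contains the final-bag cost and the deterministic terms), and then argues via a Chebyshev bound on $Z'$, the cost of all rounds after the first, that $Z'/(\Delta n)$ is negligible --- which implicitly requires controlling $\E\{Z'\}/(\Delta n)$, not just $\V\{Z'\}$. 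If you keep your decomposition, you must either prove $C_n\le 3+o(1)$ or show $(Z-Z_1)/(\Delta n)=o_p(1)$; neither is provided, and this is precisely the step your last paragraph defers.
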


\begin{proof}
With the choice $\kappa = \Delta$, using \eqref{eq:bagsplit} for the complexity due to bag splitting and \eqref{eq:bagfinal} for the complexity due to the final treatment of bags, the algorithm's query complexity can be bounded by the random variable
$$
Z = \Delta n +
\sum_k
\sum_{\textrm{bags $B$ split in round $k$}} \left( \Delta +  X_B (2+3\Delta) ( |B|-1)  \right),
$$
where all $X_B$ are i.i.d.\ geometric $(1/2)$ random variables.
Using \eqref{eq:compbound}, we have
$$
 \E \{ Z \} \le \Delta n + (\Delta + (4 + 6 \Delta) n ) \times 2 \log_\Delta (n)
 \le 19 \Delta n \log_\Delta (n).
$$
As argued in the proof of Theorem~\ref{th:main} in each round the bags get reduced by a factor of $1/\sqrt{\Delta}$. Thus, given all bags $B$ in all levels, we see that the variance $\V \{ Z \}$
is not more than
\begin{align*}
\sum_k
\sum_{\textrm{bags $B$ split in round $k$}} &\V \{ X_B \} (2+3\Delta)^2  (|B|-1)^2  \\
&\le 
(2+3\Delta)^2 \, \sum_k  \frac{n}{\Delta^{k/2}} \sum_{\textrm{bags $B$ split in round $k$}} 2 (|B|-1)\\
&\le 
2n (2+3\Delta)^2 \, \sum_k \frac{n}{\Delta^{k/2}} \\
&\le 
\frac{ 32 (n\Delta)^2 } {1 - 1/\sqrt{\Delta}}\\
&\le 
90 (n\Delta)^2  .
\end{align*}
There are two cases: if $\log_\Delta (n) \to \infty$, then by Chebyshev's inequality, 
$$
\P \{ Z > 2 \E \{Z\} \} \to 0~,
$$
where we used the fact that  $\E(Z)=\Omega(\Delta n \log_\Delta(n))$ (\cite{zhang2003complexity}).
If on the other hand, $\log_\Delta (n) \le K$ for a large constant $K$,
i.e., $\Delta \ge n^{1/K}$, then 
$$
Z \;\le\; \Delta n + \Delta + X (2+3\Delta) n + Z'
\;\le\; 6n \Delta X + Z',
$$
where
$Z'$ is defined as $Z$ with level $k=0$ excluded.
Arguing as above, we have
$$
\V \{ Z' \}
\le \frac{ 32 (n\Delta)^2 } {\sqrt{\Delta} - 1}
= o \left( (\E \{ Z \} )^2 \right),
$$
so that by Chebyshev's inequality,
$$
\frac {Z}{n \Delta} \le 6X +o_p(1),
$$
where $o_p(1)$ denotes a quantity that tends to zero in probability
as $n \to \infty$.  In other words, the sequence of
random variables $Z/(n \Delta)$ is tight.
\end{proof}

\begin{rem}
For fixed $\Delta \ge 3$, the complexity grows as $ n\log n$.  In many cases, this is exponential in the diameter of the tree.  For example, in a complete $\Delta$-ary tree, the number of leaves is at least $n/2$, while the diameter is about $2 \log_\Delta n$. Reconstruction of such trees is impossible without performing at least $n/4$ distance computations.
\end{rem}

\section{Graphical models on semi-labeled trees}\label{sec:models}

We present a family of probabilistic models over partially observed trees for which the distance-based Algorithm~\ref{alg:tree} recovers the underlying tree. Although the Gaussian and the binary tree models (also known as the Ising tree model) discussed in Section~\ref{sec:binGaus} are probably the most interesting, with little effort we can generalize this, which we do in Section~\ref{sec:more} and Section~\ref{sec:more2}.

Given a tree $T=(V, E)$ and a random vector $Y=(Y_v)_{v\in V}$ with values in the product space $\cY=\prod_{v\in V}\cY_v$, consider the underlying graphical model over $T$, that is, the family of density functions that factorize according to the tree
\begin{equation}\label{eq:fY}
f_Y(y)\;=\;\prod_{uv\in E} \psi_{uv}(y_u,y_v)\qquad\mbox{for }y\in \cY,	
\end{equation}
where $\psi_{uv}$ are non-negative potential functions (\cite{lau96}). The underlying \emph{latent tree model} over the semi-labeled tree $T$ with the labelling set $W=[n]$ is a model for $X=(X_1,\ldots, X_n)$, which is the sub-vector of $Y=(X, H)$ associated to the regular vertices. The density of $X$ is obtained from the joint density of $Y$ by marginalizing out the latent variables $H$
$$
f(x)\;=\;\int_{\cH} f_Y(x,h){\rm d} h.
$$

Note that in the definition of a semi-labeled tree, we required that all nodes of $T$ of degree $\leq 2$ are regular. The restriction complies with this definition; c.f. Section~11.1 in \cite{zwiernik2018latent}.

\subsection{The binary/Gaussian case}\label{sec:binGaus}

We single out two simple examples where $Y$ is jointly Gaussian or when $Y$ is binary. In the second case, the model on a tree coincides with the binary Ising model on $T$. In both cases, we get a useful path product formula for the correlations, which states that the correlation between any two regular nodes can be written as the product of edge correlations for edges on the path joining these nodes
\begin{equation}\label{eq:pathpr}
\rho_{ij}\;=\;\prod_{uv\in \overline{ij}} \rho_{uv}\qquad \mbox{for all }i,j\in [n].	
\end{equation}
The advantage of this representation is that \eqref{eq:pathpr} gives a direct translation of correlation structures in these models to tree metrics via $d({i,j}):=-\log |\rho_{ij}|$ for all $i\neq j$; see also \cite[Section 8.3.3]{pearl}. To make this explicit we formulate the following proposition.
\begin{prop}
Consider a latent tree model over a semi-labeled tree $T$. Whenever the correlations $\Sigma=[\rho_{ij}]$ in the underlying tree model admit the path-product formula~\eqref{eq:pathpr}, Algorithm~\ref{alg:tree}, applied to the distance oracle defined by $d({i,j})=-\log |\rho_{ij}|$ for all $i\neq j$, recovers the underlying tree.  
\end{prop}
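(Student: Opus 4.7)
The plan is to show that the function $d(i,j) = -\log|\rho_{ij}|$ satisfies Definition~\ref{def:treemetric}, i.e., it is a tree metric realized by the very same semi-labelled tree $T$ (with a positive length assignment). Once this is established, Theorem~\ref{th:3metric} will guarantee that $d$ uniquely determines $T$, and Theorem~\ref{th:main} will guarantee that Algorithm~\ref{alg:tree} reconstructs $T$ from its distance oracle.

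First I would define the edge-length function $\ell:E\to\R_+$ by setting, for every edge $uv\in E$ (including edges incident to latent nodes),
\[
\ell(uv) \;=\; -\log|\rho_{uv}|.
\]
Since $|\rho_{uv}|\le 1$ always, $\ell(uv)\ge 0$; I would invoke the standing non-degeneracy assumption of the model---each edge correlation is strictly between $-1$ and $1$, equivalently $|\rho_{uv}|<1$---so $\ell(uv)>0$ as required for a tree metric. (If $|\rho_{uv}|=1$ for some edge, the two endpoints are almost surely a deterministic function of one another, so the latent tree is not identifiable in the first place, and one may contract such edges without loss of generality.)

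Next, I would take logarithms of the absolute value of the path-product formula~\eqref{eq:pathpr}: for any $i,j\in [n]$ with $i\ne j$,
\[
d(i,j) \;=\; -\log|\rho_{ij}| \;=\; -\sum_{uv\in\overline{ij}} \log|\rho_{uv}| \;=\; \sum_{uv\in\overline{ij}} \ell(uv) \;=\; d_{T,\ell}(i,j).
\]
Together with the conventions $d(i,i)=0$ and $d(i,j)=d(j,i)$, this shows that $d$ coincides with the tree metric on $[n]$ induced by the semi-labelled tree $T$ and the positive length assignment $\ell$. By Definition~\ref{def:treemetric}, $d$ is a tree metric, and by the uniqueness part of Theorem~\ref{th:3metric}, the tree $T$ and the edge lengths $\ell$ are recoverable from $d$ alone.

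Finally, I would invoke Theorem~\ref{th:main}: applied to the distance oracle $(i,j)\mapsto d(i,j)=-\log|\rho_{ij}|$, which is a bona fide tree metric associated with $T$, Algorithm~\ref{alg:tree} outputs the induced semi-labelled tree with probability one, and with the stated expected query complexity $O(\Delta n \log_\Delta n)$. The main conceptual step is the observation that the path-product formula is exactly the multiplicative counterpart of the additivity of distances along paths, so the entire argument reduces to a change of coordinates $x\mapsto -\log|x|$; there is no real obstacle beyond ensuring strict positivity of the edge lengths, which is the mild non-degeneracy condition discussed above.
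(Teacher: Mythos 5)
Your argument is correct and is essentially the paper's own (implicit) reasoning: the proposition is treated as an immediate consequence of the fact that the path-product formula turns $-\log|\rho_{ij}|$ into a path-additive quantity, i.e.\ a tree metric realized by $T$, after which Theorem~\ref{th:main} (and uniqueness via Theorem~\ref{th:3metric}) applies. The only point worth stating alongside $|\rho_{uv}|<1$ is the companion non-degeneracy condition $\rho_{uv}\neq 0$ on every edge, which you need for the edge lengths $-\log|\rho_{uv}|$ to be finite.
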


Note that our algorithm recovers all the edge lengths, so we can also find the absolute values of the correlations between the latent variables. In the Gaussian case, this yields parameter identification.
\begin{rem}
In the Gaussian latent tree model over a semi-labeled tree $T$, Algorithm~\ref{alg:tree} recovers the underlying tree and the model parameters up to sign swapping of the latent variables. 
\end{rem}

It turns out that the basic binary/Gaussian setting can be largely generalized. We discuss three such generalizations:
\begin{enumerate}
	\item [(1)] General Markov models
	\item [(2)] Linear models
	\item [(3)] Non-paranormal distributions
\end{enumerate}

\noindent We briefly describe these models for completeness. The former two are dealt with in Section~11.2 in \cite{zwiernik2018latent}. 

\subsection{General Markov models and linear models}\label{sec:more}

By general Markov model, we mean a generalization of the binary latent tree models, where each $\cY_v=\{0,\ldots,d-1\}$ for some finite $r\geq 1$. Denoting by $P^{uv}$ the $d\times d$ matrix representing the joint distribution of $(Y_u, Y_v)$, and by $P^{vv}$ the diagonal $d\times d$ matrix with the marginal distribution of $Y_v$ on the diagonal, we can define for any two nodes $u,v$
\begin{equation}\label{eq:tauGMM}
\tau_{uv}\;:=\;\frac{\det(P^{uv})}{\sqrt{\det(P^{uu})\det(P^{vv})}}.
\end{equation}

It turns out that for these new quantities, an equation of type \eqref{eq:pathpr} still holds, namely, $\tau_{ij}=\prod_{uv\in \overline{ij}}\tau_{uv}$, so we again obtain the tree distance $d({i,j})=-\log|\tau_{ij}|$. 

\begin{prop}
	In a general Markov model over a semi-labeled tree $T$ we can recover $T$ using Algorithm~\ref{alg:tree} from the distances $d({i,j})=-\log|\tau_{ij}|$ with $\tau_{ij}$ defined in \eqref{eq:tauGMM}.
	\end{prop}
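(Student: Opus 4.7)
The plan is to reduce the claim to the preceding proposition by verifying the multiplicative path-product formula for the quantities $\tau_{ij}$ defined in~\eqref{eq:tauGMM}. Once that is established, the distances $d(i,j) = -\log|\tau_{ij}|$ are additive along paths in $T$, hence form a genuine tree metric on $[n]$ by Theorem~\ref{th:3metric}, and the algorithm recovers $T$ by Theorem~\ref{th:main}.

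First, I would exploit the Markov property of $Y$ on $T$. For any three nodes $u, w, v$ with $w$ separating $u$ from $v$, the conditional independence $Y_u \cip Y_v \mid Y_w$ translates (assuming the standard nondegeneracy that $P^{ww}$ has full support) into the matrix identity
\[
P^{uv} \;=\; P^{uw}\,(P^{ww})^{-1}\,P^{wv},
\]
where $(P^{ww})^{-1}$ is the diagonal matrix with entries $1/\P(Y_w = c)$. Taking determinants yields $\det(P^{uv}) = \det(P^{uw})\det(P^{wv})/\det(P^{ww})$, and substituting into the definition~\eqref{eq:tauGMM} gives the three-node identity $\tau_{uv} = \tau_{uw}\tau_{wv}$.

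Second, I would iterate this along an arbitrary path $\overline{ij}$ in $T$: each interior vertex on the path separates the two extremities, so induction on the path length yields
\[
\tau_{ij} \;=\; \prod_{uv \in \overline{ij}} \tau_{uv}.
\]
Taking $-\log|\cdot|$ converts the product into a sum, so $d(i,j) = \sum_{uv \in \overline{ij}} (-\log|\tau_{uv}|)$ is additive along paths, with positive edge weights $\ell(uv) = -\log|\tau_{uv}|$ whenever $0 < |\tau_{uv}| < 1$ (the natural nondegeneracy at edges, without which edges collapse and the tree cannot be identified). Thus $d$ satisfies the four-point condition, and the previous proposition applied to $d$ concludes that Algorithm~\ref{alg:tree} recovers $T$.

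The main obstacle is the matrix factorization $P^{uv} = P^{uw}(P^{ww})^{-1} P^{wv}$: while $Y_u \cip Y_v \mid Y_w$ is immediate from the tree Markov property, one must carefully index the joint-distribution matrices and ensure invertibility of $P^{ww}$ to extract the clean determinant identity. Once this step is handled, the rest is routine bookkeeping and appeal to the earlier results.
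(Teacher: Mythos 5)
Your proposal is correct and follows essentially the paper's route: the paper simply asserts the path-product identity $\tau_{ij}=\prod_{uv\in\overline{ij}}\tau_{uv}$ (citing Section~11.2 of \cite{zwiernik2018latent}) and then treats $d(i,j)=-\log|\tau_{ij}|$ as a tree metric fed to Algorithm~\ref{alg:tree}, and your derivation of that identity via $P^{uv}=P^{uw}(P^{ww})^{-1}P^{wv}$ and determinants is exactly the standard argument behind the cited fact. Your explicit nondegeneracy caveats (invertible $P^{ww}$ at latent nodes and $0<|\tau_{uv}|<1$ on edges, so that edge lengths are finite and strictly positive) are the same implicit assumptions the paper relies on for identifiability.
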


More generally, suppose that $Y_v$ are now potentially vector-valued, all in $\R^k$, and it holds for every edge $uv$ of $T$ that $\E[Y_u|Y_v]$ is an affine function of $Y_v$. Let 
$$
\Sigma_{uv}\;=\;{\rm cov}(Y_u,Y_v)\;=\;\E Y_u Y_v^\top -\E Y_u \E Y_v^\top.
$$In this case, defining $\tau_{uv}$ as 
\begin{equation}\label{eq:tauLM}
\tau_{uv}\;:=\;\det(\Sigma_{uu}^{-1/2}\Sigma_{uv}\Sigma^{-1/2}_{vv}),
\end{equation}
 we reach the same conclusion as for general Markov models; see Section~11.2.3 in \cite{zwiernik2018latent} for details.

\begin{prop}
	In a linear model over a semi-labeled tree $T$ we can recover recover $T$ using Algorithm~\ref{alg:tree} from the distances $d({i,j})=-\log|\tau_{ij}|$ with $\tau_{ij}$ defined in \eqref{eq:tauLM}.
	\end{prop}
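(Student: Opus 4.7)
The plan is to reduce this proposition to the framework already established for the binary/Gaussian case by showing that the quantities $\tau_{uv}$ defined via \eqref{eq:tauLM} satisfy a path-product formula $\tau_{ij}=\prod_{uv\in\overline{ij}}\tau_{uv}$. Once this is in hand, $d(i,j)=-\log|\tau_{ij}|$ is additive along paths of $T$ and therefore agrees with the tree metric $d_{T,\ell}$ for the edge lengths $\ell(uv)=-\log|\tau_{uv}|$. Assuming the nondegeneracy condition $0<|\tau_{uv}|<1$ on each edge (so that $\ell(uv)>0$), Theorem~\ref{th:3metric} guarantees that $d$ is a genuine tree metric whose underlying semi-labelled tree is exactly $T$, and then Theorem~\ref{th:main} implies that Algorithm~\ref{alg:tree} applied to this distance oracle correctly recovers $T$.

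The core of the proof is therefore the path-product identity. First I would establish a two-edge version: if $v$ lies on the unique path from $u$ to $w$ in $T$, then
\[
\Sigma_{uw} \;=\; \Sigma_{uv}\,\Sigma_{vv}^{-1}\,\Sigma_{vw}.
\]
This uses two ingredients that are available under the linear-model assumption. The Markov property of the graphical model on $T$ gives $Y_u\independent Y_w\mid Y_v$, so $\E[Y_u\mid Y_v,Y_w]=\E[Y_u\mid Y_v]$; and the assumption that $\E[Y_u\mid Y_v]$ is affine in $Y_v$ forces this conditional expectation to be $\mu_u+\Sigma_{uv}\Sigma_{vv}^{-1}(Y_v-\mu_v)$. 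Taking $\cov(\cdot,Y_w)$ of both sides yields the displayed identity. (The fact that the neighbour-wise affine assumption propagates to pairs $(u,w)$ with an intermediate node $v$ is a standard consequence of the tree Markov structure; I would spell this out by induction on the path length.)

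Next, iterating along a path $i=v_0,v_1,\ldots,v_k=j$ gives
\[
\Sigma_{ij}\;=\;\Sigma_{v_0 v_1}\Sigma_{v_1 v_1}^{-1}\Sigma_{v_1 v_2}\Sigma_{v_2 v_2}^{-1}\cdots\Sigma_{v_{k-1} v_{k-1}}^{-1}\Sigma_{v_{k-1} v_k}.
\]
Inserting the factorisations $\Sigma_{v_l v_l}^{-1}=\Sigma_{v_l v_l}^{-1/2}\Sigma_{v_l v_l}^{-1/2}$ and regrouping gives
\[
\Sigma_{ii}^{-1/2}\Sigma_{ij}\Sigma_{jj}^{-1/2}\;=\;\prod_{l=0}^{k-1}\Sigma_{v_l v_l}^{-1/2}\Sigma_{v_l v_{l+1}}\Sigma_{v_{l+1} v_{l+1}}^{-1/2},
\]
and taking determinants produces the desired $\tau_{ij}=\prod_{uv\in\overline{ij}}\tau_{uv}$ by multiplicativity of $\det$.

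The main technical obstacle is the cov-recursion step: one must be careful that the affine conditional expectation property, assumed only for edges, extends to arbitrary pairs of nodes separated by an intermediate vertex on the tree. The clean way to handle this is to combine the Markov property with a short induction along the path, using the two-edge identity as the base case and the fact that once $\Sigma_{uw}=\Sigma_{uv}\Sigma_{vv}^{-1}\Sigma_{vw}$ is known, the regression coefficient of $Y_u$ on $Y_w$ factors through $v$, so affinity is preserved on longer paths. Everything else reduces to algebraic manipulation of block covariance matrices, and the final invocation of Theorem~\ref{th:main} is immediate.
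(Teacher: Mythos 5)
Your proposal is correct and follows essentially the same route as the paper, which proves nothing beyond deferring to the path-product formula $\tau_{ij}=\prod_{uv\in\overline{ij}}\tau_{uv}$ established in Section~11.2.3 of \cite{zwiernik2018latent}; your derivation of that formula (edge-wise affine regressions plus the tree Markov property, iterated along the path, then determinants) is exactly the cited argument, after which additivity of $d(i,j)=-\log|\tau_{ij}|$ and Theorem~\ref{th:main} finish the proof. The nondegeneracy caveat you flag ($0<|\tau_{uv}|<1$ on every edge, so that edge lengths are strictly positive and finite) is indeed needed and is left implicit in the paper.
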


\subsection{Non-paranormal distributions}\label{sec:more2}

Suppose that $Z=(Z_v)$ is a Gaussian vector 
{
with underlying latent tree $T=(V, E)$.} Suppose that $Y$ is a monotone transformation of $Z$, so that $Y_v=f_v(Z_v)$ for strictly increasing functions $f_v$, $v\in V$. The conditional independence structure of $Y$ and $Z$ are the same. The problem is that the correlation structure of $Y$ may be quite complicated and may not satisfy the product path formula in \eqref{eq:pathpr}. Suppose however that we have access to the Kendall-$\tau$ coefficients $K=[\kappa_{ij}]$ for $X$ with 
\begin{equation}\label{eq:kt}
\kappa_{ij}\;=\;{\rm corr}({\rm sgn}(X_i-X_i'),{\rm sgn}(X_j-X_j')),
\end{equation}
where $X'$ is an independent copy of $X$. Then we can use the fact that for all $i\neq j$
$$
{\rm corr}({\rm sgn}(X_i-X_i'),{\rm sgn}(X_j-X_j'))\;=\;{\rm corr}({\rm sgn}(Z_i-Z_i'),{\rm sgn}(Z_j-Z_j'))\;=:\;\kappa^Z_{ij}.
$$
As observed by \cite{liu2012high}, since $Z$ is Gaussian, we have a simple formula that relates $\kappa^Z_{ij}$ to the correlation coefficient $\rho_{ij}={\rm corr}(Z_i,Z_j)$, for all $i\neq j$
$$
\rho_{ij}\;=\;\sin(\tfrac\pi2 \kappa^Z_{ij})\;=\;\sin(\tfrac\pi2 \kappa_{ij}).
$$ 
Applying a simple transformation to the oracle $K$ gives us access to the underlying Gaussian correlation pattern, which now can be used as in the Gaussian case. 
\begin{prop}
	In a non-paranormal distribution over a semi-labeled tree $T$ we can recover $T$ using Algorithm~\ref{alg:tree} from the distances $$d({i,j})\;:=\;-\log|\sin(\tfrac\pi2\kappa_{ij})|,$$ where $\kappa_{ij}$ is the Kendall-$\tau$ coefficient defined in \eqref{eq:kt}.
	\end{prop}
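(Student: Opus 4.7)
The plan is to reduce the claim to the Gaussian case already handled in the proposition of Section~\ref{sec:binGaus}. Two classical ingredients suffice: invariance of $\kappa_{ij}$ under monotone marginal transformations, and the sine identity relating Kendall's $\tau$ to the correlation of a bivariate Gaussian.

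First I would show that the $\kappa_{ij}$ computed from $X$ equals the analogous quantity $\kappa^{Z}_{ij}$ computed from the latent Gaussian vector $Z$. Because $X_v = f_v(Z_v)$ with $f_v$ strictly monotone and $Z$ has a continuous distribution, the events $\{X_i>X_i'\}$ and $\{Z_i>Z_i'\}$ agree almost surely when $f_i$ is increasing, and are complements almost surely when $f_i$ is decreasing. In the decreasing case, the net effect on the bivariate Gaussian is to negate $\rho_{ij}$, which is harmless because the dissimilarity uses the absolute value $|\sin(\tfrac{\pi}{2}\kappa_{ij})|$. Hence one may assume each $f_v$ is increasing, in which case the identity $\kappa_{ij} = \kappa^{Z}_{ij}$ follows directly from the definition \eqref{eq:kt}.

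Next I would invoke the formula $\rho_{ij} = \sin(\tfrac{\pi}{2}\kappa^{Z}_{ij})$ of \cite{liu2012high} for bivariate Gaussians, already recalled in the paragraph preceding the statement, to obtain
\[
d(i,j) \;=\; -\log\bigl|\sin(\tfrac{\pi}{2}\kappa_{ij})\bigr| \;=\; -\log|\rho_{ij}|.
\]
Since $Z$ is Gaussian and Markov with respect to $T$, its correlation matrix satisfies the path-product formula \eqref{eq:pathpr}, so $d$ is exactly the tree metric on $[n]$ induced by $T$ with edge lengths $-\log|\rho_{uv}|$. Under the standard non-degeneracy condition $0<|\rho_{uv}|<1$ on every edge, these lengths are strictly positive, so Definition~\ref{def:treemetric} is met and the Gaussian proposition of Section~\ref{sec:binGaus} (equivalently, Theorem~\ref{th:main} applied to the oracle $d$) yields correct recovery of $T$ by Algorithm~\ref{alg:tree}.

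The only subtlety I anticipate is in the first step: definition \eqref{eq:kt} is only manifestly invariant under strictly increasing marginals, and the bookkeeping required to absorb the decreasing case into the absolute value in $d$ is the delicate point. Once that reduction is in place, the remainder of the argument is simply a chain of known identities and an appeal to the Gaussian result already established.
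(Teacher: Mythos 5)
Your argument matches the paper's own reasoning: invariance of the Kendall coefficient under strictly monotone marginal transformations, the sine identity of \cite{liu2012high} to recover $|\rho_{ij}|$, and then reduction to the Gaussian path-product case \eqref{eq:pathpr} handled by Theorem~\ref{th:main}. The only difference is that you spell out the bookkeeping for decreasing marginals (absorbed by the absolute value), a point the paper leaves implicit; otherwise the proof is essentially identical.
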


\section{Statistical guarantees}

In this section, we illustrate how the results developed in this paper can be applied 
{ in a more realistic scenario when the entries of the covariance matrix cannot be measured exactly. In particular, }suppose a random sample of size $N$ is observed from a zero-mean distribution with covariance matrix $\Sigma$. Assume that $\Sigma_{ii}=1$ for all $i=1,\ldots, n$ and the correlations $\Sigma_{ij}=\rho_{ij}$ satisfy parametrization \eqref{eq:pathpr} for some semilabeled tree. In this case $d(i,j)=-\log|\rho_{ij}|$ for all $i\neq j$ forms a tree metric. 

Denote by $\widehat\rho_{ij}$ a suitable estimator of the correlations based on the sample and let $\widehat d(i,j)=-\log|\widehat\rho_{ij}|$ be the corresponding plug-in estimator of the distances. Since the  $\widehat d(i,j)$ do not form a tree metric, we cannot apply directly Algorithm~\ref{alg:tree}. The algorithm uses distances at { five} places:
\begin{enumerate}
    \item In Algorithm~\ref{alg:basic}, to compute $D(v)$.
    \item In Algorithm~\ref{alg:basic}, to decide if $\rho(B_i)$ is latent or not.
    \item In the case when $\rho(B_i)$ is latent, Algorithm~\ref{alg:basic} also uses the distances to calculate $d(\rho(B_i),v)$ for all $v\in B_i$.
    \item In Algorithm~\ref{alg:explode}, to group nodes in $B$ according to the connected components obtained by removing $\rho(B)$.
    \item When recovering the skeleton of a small subtree in the case when $|B|\leq \kappa$.
\end{enumerate}

{ In order to adapt the algorithm to the ``noisy'' distance oracle,}
we first propose the noisy versions of \textsc{basic} and \textsc{explode}. The procedure \textsc{basic}.noisy is outlined in Algorithm~\ref{alg:basicnoisy}, and \textsc{explode}.noisy in Algorithm~\ref{alg:explodenoisy}. The performance of the procedure depends on the following quantities
\begin{equation}\label{eq:LU}
\ell\;:=\;\min_{i\neq j} d({i,j}),\quad\qquad \mathit{u}\;:=\;\max_{i\neq j} d({i,j}),	
\end{equation}
where the minimum and maximum are taken over all regular nodes.

{ The algorithms have an additional input parameter $\epsilon>0$ that is an upper bound for the noise level. More precisely, the algorithms work correctly whenever $\max_{i\neq j} |\widehat d(i,j)-d(i,j)| \le \epsilon$.
}

\begin{algorithm}[H]
Input: a bag $B$, $\alpha\in B$, and $\epsilon>0$\;
\For{$v\in B$}{
Compute $\widehat D(v)=\widehat d({ v,\alpha})-\widehat d({v,\rho})$\; 
}
Order $v\in B$ according to the decreasing value of $\widehat D(v)$\;
If $|\widehat D(u)-\widehat D(v)|\leq  4\epsilon$, assign $u,v$ to the same bag\;
Denote the resulting bags by $B_1,\ldots, B_{k}$\;
\For{$i=1,\ldots,k$}{
$u_i=\arg\min_{v\in B_i} \widehat d(\alpha,v)$\;
\If{$|\widehat d(\rho,u_i)+\widehat d(u_i,\alpha)-\widehat d(\rho,\alpha)|\leq 3\epsilon$}{$\rho(B_i)=u_i$}
\Else{Identify latent node $w_i$\;
Add $w_i$ to $B_i$\;
Set $\rho(B_i)=w_i$\;
{ 
Calculate } $\widehat d(w_i,v)$ for all $v\in B_i$ using \eqref{eq:newreg}\;}
}
Return $B_1,\ldots,B_k$ and the skeleton $(\rho(B_1),\rho(B_2)),\ldots,(\rho(B_{k-1},\rho(B_k)))$\;
\label{alg:basicnoisy}
\caption{\textsc{basic}.noisy$(B,\alpha,\epsilon)$}
\end{algorithm}

The next simple fact is used repeatedly below.
\begin{lem}\label{lem:ud}
	Suppose $\max_{i\neq j}|\widehat d(i,j)-d(i,j)|\leq \epsilon$ for all regular $i\neq j$ and let $a\in \R^{n\choose 2}$. Then $|a^\top (\widehat d-d)|\leq \|a\|_1\epsilon$. In particular,
	\begin{enumerate}
		\item [(i)] if $a^\top d=0$ then 
	$|a^\top \widehat d|\leq\|a\|_1 \epsilon$;
	\item [(ii)] if $a^\top d\geq \eta$ then $a^\top \widehat d\geq \eta-\|a\|_1\epsilon$.
	\end{enumerate}
\end{lem}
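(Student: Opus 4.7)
The plan is to observe that the statement is really just Hölder's inequality (or equivalently, a bound of $\ell_1$ against $\ell_\infty$) applied to the error vector $\widehat d - d \in \R^{\binom{n}{2}}$. The hypothesis gives the $\ell_\infty$ bound $\|\widehat d - d\|_\infty \leq \epsilon$, and the conclusion pairs this with the $\ell_1$ norm of $a$.

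First I would expand the inner product and apply the triangle inequality componentwise:
\begin{equation*}
|a^\top(\widehat d - d)| \;=\; \Bigl|\sum_{i<j} a_{ij}\bigl(\widehat d(i,j)-d(i,j)\bigr)\Bigr| \;\leq\; \sum_{i<j} |a_{ij}|\,|\widehat d(i,j)-d(i,j)|.
\end{equation*}
Bounding each error term by $\epsilon$ and factoring it out gives $|a^\top(\widehat d - d)| \leq \epsilon \sum_{i<j}|a_{ij}| = \|a\|_1\epsilon$, which is the main inequality.

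For the two particular cases, I would write $a^\top \widehat d = a^\top d + a^\top(\widehat d - d)$ and invoke the main inequality. In case (i), $a^\top d = 0$ leaves $|a^\top \widehat d| = |a^\top(\widehat d - d)| \leq \|a\|_1\epsilon$. In case (ii), $a^\top \widehat d \geq a^\top d - |a^\top(\widehat d - d)| \geq \eta - \|a\|_1\epsilon$.

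Since every step is a one-line manipulation, there is no real obstacle here; the lemma is essentially bookkeeping that packages the elementary $\ell_1$--$\ell_\infty$ duality in the form that will be applied repeatedly in the noisy analysis of \textsc{basic}.noisy and \textsc{explode}.noisy. The only thing worth being mildly careful about is ensuring the indexing convention for $a\in\R^{\binom{n}{2}}$ is consistent with the symmetry $\widehat d(i,j)=\widehat d(j,i)$, but this is trivial.
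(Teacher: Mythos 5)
Your proof is correct and matches the intended argument: the paper states the lemma without proof precisely because it is the elementary $\ell_1$--$\ell_\infty$ (H\"older) bound you write out, with (i) and (ii) following by adding and subtracting $a^\top d$. Nothing is missing.
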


\begin{algorithm}[H]\label{alg:explodenoisy}
Input: a bag $B$ \;
\While{$|B|>1$}
{Take $u \in B$, $u \not= \rho(B)$\;
Set $B^* = \{ v \in B, v\not=\rho(B):   d(u,\rho(B))+d(v,\rho(B))-d(u,v)>3\epsilon \} \cup \{ \rho(B) \}$\;
Set $\rho(B^*) = \rho(B)$\;
Output $B^*$ \;
Set $B \leftarrow \{ \rho (B) \} \cup ( B \setminus B^* )$\;
}
\caption{\textsc{explode}.noisy$(B,\epsilon)$}
\end{algorithm}

In what follows, we condition on the random event
\begin{eqnarray}
\cE(\epsilon)&=&\left\{\max_{i\neq j}|\widehat d(i,j)-d(i,j)|\leq \epsilon\;\;\mbox{for all regular }i\neq j\right\}\\
\nonumber&=&\left\{\max_{i\neq j}\left|\log \left|\frac{\widehat\rho_{ij}}{\rho_{ij}}\right|\right|\leq \epsilon\mbox{ for all regular }i\neq j\right\}.
\end{eqnarray}
\begin{prop}\label{prop:noisyasnoiseless}
Suppose that in the current round, the event $\mathcal E(\epsilon)$ holds with $\epsilon<\ell/4$. Then Algorithm~\ref{alg:basicnoisy} and Algorithm~\ref{alg:explodenoisy} applied to the noisy distances gives the same output as Algorithm~\ref{alg:basic} and Algorithm~\ref{alg:explode} applied to their noiseless versions.	
\end{prop}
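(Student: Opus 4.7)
The plan is to walk through each decision point of \textsc{basic}.noisy and \textsc{explode}.noisy and verify, using Lemma~\ref{lem:ud}, that under $\mathcal{E}(\epsilon)$ with $\epsilon<\ell/4$ every noisy test returns exactly the answer its noiseless counterpart returns. The key observation is that each such test amounts to comparing an integer linear combination of pairwise distances to a threshold, so Lemma~\ref{lem:ud} reduces the argument to checking that the threshold strictly separates the zero case from the smallest positive case; with $\epsilon<\ell/4$ this separation holds whenever the positive case is bounded below by $2\ell$.

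For \textsc{basic}.noisy I would handle four sub-steps in order. (i)~The grouping rule $|\widehat D(u)-\widehat D(v)|\le 4\epsilon$ is a linear test with $\|a\|_1=4$; in the noiseless tree $D(u)=D(v)$ iff $u$ and $v$ hang off the $\alpha$-to-$\rho$ path at the same node $p$, and otherwise $|D(u)-D(v)|=2\,d(p_u,p_v)\ge 2\ell$, so $4\epsilon<\ell$ makes the noisy rule recover the exact partition. (ii)~The choice $u_i=\arg\min_{v\in B_i}d(v,\alpha)$ is preserved by $\widehat d$ because $d(v,\alpha)-d(u_i,\alpha)$ for any competing $v\in B_i$ satisfies the same $2\ell$ separation. (iii)~The on-path test $|\widehat d(\rho,u_i)+\widehat d(u_i,\alpha)-\widehat d(\rho,\alpha)|\le 3\epsilon$ has $\|a\|_1=3$ and the noiseless value is either $0$ (on the path) or $2\,d(u_i,w_i)\ge 2\ell>6\epsilon$ (off the path), so the $3\epsilon$ threshold is decisive. (iv)~Once $w_i$ is declared latent, the update \eqref{eq:newreg} is an affine combination with $\|a\|_1 = 3/2$, so the freshly created representative enters the next round with distances still within $O(\epsilon)$ of the truth, preserving $\mathcal{E}(\epsilon)$ up to a harmless constant.

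For \textsc{explode}.noisy the only test is $\widehat d(u,\rho(B))+\widehat d(v,\rho(B))-\widehat d(u,v)>3\epsilon$. The corresponding noiseless quantity equals $2\,d(\rho(B),m_{uv})$, where $m_{uv}$ is the Steiner node of $\{u,v,\rho(B)\}$; it vanishes exactly when $u$ and $v$ lie in different subtrees at $\rho(B)$ and is at least $2\ell$ otherwise. Since $\|a\|_1=3$ and $\epsilon<\ell/4$, Lemma~\ref{lem:ud} forces the noisy and noiseless partitions of $B$ to coincide, so the two while-loops execute in lock-step and terminate with the same collection of bags.

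The main obstacle is justifying the uniform $2\ell$ lower bound on every nonzero linear combination that arises, in particular $2\,d(p_u,p_v)$ and $2\,d(u_i,w_i)$ when the relevant hanging or branching node is latent and thus not literally covered by the definition~\eqref{eq:LU}. I would formalize this by maintaining, inductively across rounds, the invariant that any two distinct \emph{active} representatives (original regular nodes together with latent nodes created in earlier rounds) are at tree distance at least $\ell$; the construction of latent-to-node distances through~\eqref{eq:newreg} preserves this invariant because those values are exact affine functions of original regular-to-regular distances in $T$. Once that invariant is in hand, the proof reduces to the four bullets above and an identical pair of case checks for \textsc{explode}.
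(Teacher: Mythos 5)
Your core argument is essentially the paper's own proof: both go decision point by decision point through \textsc{basic}.noisy and \textsc{explode}.noisy, apply Lemma~\ref{lem:ud} with the relevant $\|a\|_1$ ($4$ for the grouping test, $3$ for the on-path test and the explode test), and rely on the dichotomy that each noiseless quantity is either $0$ or at least $2\ell$, which $\epsilon<\ell/4$ separates. Your items (i)--(iii) match the paper's steps (the paper settles the argmin step by a short contradiction using an $\ell$-separation, a cosmetic difference), and you actually give more detail on \textsc{explode} than the paper, which disposes of it with ``similarly''. Your item (iv) is not part of this proposition in the paper: the accumulation of error through \eqref{eq:newreg} across rounds is isolated in Lemma~\ref{lem:bounddist} and absorbed into the choice $\epsilon\leq \ell/(4(1+\log_\Delta(n)))$ in the final theorem, rather than being a ``harmless constant''; since the proposition conditions on $\mathcal E(\epsilon)$ holding in the current round, this is out of scope here but not wrong to mention.

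The genuine divergence is your last paragraph, and the proposed fix does not do what you want. The quantities that need the lower bound---$2d(p_u,p_v)$ in the grouping test and $2d(\rho(B),m_{uv})$ in explode---involve branch (Steiner) points of the tree induced by the current bag, which are typically latent nodes not yet discovered; they are not ``active representatives'', so an invariant about pairwise distances of representatives says nothing about them. Moreover, under the literal definition \eqref{eq:LU} (minimum over \emph{regular} pairs) neither your invariant nor the needed bound is true: two latent branch points can be joined by an arbitrarily short edge while all regular--regular distances remain large, in which case $D(u)-D(v)$ can be positive but far below $2\ell$, and the noisy rule with threshold $4\epsilon$ may merge bags that the noiseless rule separates. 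The paper's proof silently asserts $D(u)-D(v)\ge 2\ell$ and $d(\rho,u_i)+d(\alpha,u_i)-d(\rho,\alpha)\ge 2\ell$, i.e.\ it implicitly reads $\ell$ as a lower bound on the separation of distinct nodes of $T$ (equivalently, the minimum edge length); with that reading the dichotomy is immediate and no invariant is needed, whereas with the literal reading your induction does not close the gap. So either adopt the edge-length interpretation of $\ell$ explicitly, or strengthen the hypothesis---but drop the representative-distance invariant, which is neither sufficient nor valid in general.
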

\begin{proof}
    In the first part, Algorithm~\ref{alg:basicnoisy} computes $\widehat D(v)$ for all $v$ and it uses this information to produce bags $B_1,\ldots,B_{k}$. The bags in Algorithm~\ref{alg:basic} are obtained by grouping nodes based on the increasing values of $D(v)$. By Lemma~\ref{lem:ud}(i), if $D(u)=D(v)$ then 
	$|\widehat D(u)-\widehat D(v)|\;\leq\;4\epsilon$. Moreover, if $D(u)>D(v)$ then it must be that $D(u)-D(v)\geq 2\ell$ and so, by Lemma~\ref{lem:ud}(ii), $\widehat D(u)-\widehat D(v)>2\ell-4\epsilon>4\epsilon$. This shows that this step of Algorithm~\ref{alg:basicnoisy} provides the same bags as Algorithm~\ref{alg:basic}. In the second part of the algorithm, we decide whether or not the corresponding path nodes $\rho(B_i)$ are regular. Let $\hat u_i=\arg\min_{v\in B_i} \widehat d(\alpha,v)$ and $u_i=\arg\min_{v\in B_i} d(\alpha,v)$. If $\hat u_i\neq u_i$ then $d({\rho,\hat u_i})-d({\rho,u_i})\geq \ell$. By Lemma~\ref{lem:ud}(ii), 
	$$
	\widehat d({\rho,\hat u_i})-\widehat d({\rho,u_i})\;\geq\;\ell-2\epsilon\;\geq \;2\epsilon,
	$$
	which contradicts the fact that $\hat u_i=\arg\min_{v\in B_i} \widehat d({\alpha,v})$. We conclude that $\hat u_i=u_i$.	Now consider the problem of deciding whether $\rho(B_i)=u_i$. Suppose first that $d({\rho, u_i})+d({\alpha, u_i})-d({\rho,\alpha})=0$ (i.e., $\rho(B_i)=u_i$). By Lemma~\ref{lem:ud}(i),
	$$
	|\widehat d({\rho, u_i})+\widehat d({\alpha, u_i})-\widehat d({\rho,\alpha})|\;\leq\;3\epsilon.
	$$
	If $d({\rho, u_i})+d({\alpha, u_i})-d({\rho,\alpha})>0$ then, since $D$ is a tree metric,  it must be $d({\rho, u_i})+d({\alpha, u_i})-d({\rho,\alpha})\geq 2 \ell$.  Then, by Lemma~\ref{lem:ud}(ii) and by the fact that $\epsilon<\ell/4$
	$$
	\widehat d({\rho, u_i})+\widehat d({\alpha, u_i})-\widehat d({\rho,\alpha})\;\geq\;2\ell-3\epsilon\;> \;5\epsilon.
	$$
 This shows the correctness of the second part of Algorithm~\ref{alg:basicnoisy}. 
 
 Since $d(v,\alpha)+d(v,\rho)-d(\alpha)\geq 2\ell$, we get $\widehat d(v,\alpha)+\widehat d(v,\rho)-\widehat d(\alpha)\geq 2\ell-3\epsilon$
 
We can similarly show that Algorithm~\ref{alg:explodenoisy} gives the same output as Algorithm~\ref{alg:explode}.
\end{proof}

The problem with applying Proposition~\ref{prop:noisyasnoiseless} recursively to each round is that the event $\cE(\epsilon)$ only bounds the noise for distances between the $n$ originally regular nodes. As the procedure progresses, new nodes are made regular; if $\rho(B_i)$ is latent we make it ``regular'' by updating distances $\widehat d(w_i,v)$ for all $v\in B_i$ using \eqref{eq:newreg}. 
\begin{lem}\label{lem:bounddist}
    Suppose that event $\cE(\epsilon)$ holds. After $R$ rounds of the algorithm,  we have $|\widehat d(i,j)-d(i,j)|\leq (1+\tfrac{R}{2})\epsilon$ for all $i,j\in B$ that are regular in the $R$-th round. 
\end{lem}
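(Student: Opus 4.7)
The plan is to proceed by induction on $R$. The base case $R=0$ is immediate from the event $\cE(\epsilon)$, since no latent nodes have been upgraded yet. For the inductive step I consider any pair $i,j$ in a bag at the end of round $R$. If neither was freshly upgraded during round $R$, the induction hypothesis gives the (strictly better) bound $(1+(R-1)/2)\epsilon$, so the interesting case is when one of them, say $i=w$, is a latent node promoted to regular during round $R$; then $\widehat d(w,j)$ was set via \eqref{eq:newreg} for some query node $\alpha$ and sub-bag root $\rho$ in a \textsc{basic}.noisy invocation during the round.

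Subtracting the noiseless version of \eqref{eq:newreg} from its noisy counterpart and invoking Lemma~\ref{lem:ud} (the coefficient vector has $\ell_1$-norm $\tfrac32$) yields
\[
|\widehat d(w,j)-d(w,j)| \;\leq\; \tfrac12\bigl(|\widehat d(j,\alpha)-d(j,\alpha)| + |\widehat d(j,\rho)-d(j,\rho)| + |\widehat d(\alpha,\rho)-d(\alpha,\rho)|\bigr).
\]
The key observation is that $j$ and $\alpha$ are both originally regular nodes while only $\rho$ may be a latent node promoted in some earlier round. The query $\alpha$ is one of the $u_i$ sampled uniformly from $B\setminus\{\rho(B)\}$ by the outer algorithm, and by an easy structural invariant every bag handled by the algorithm carries at most one non-originally-regular node, namely its current representative. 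Consequently $|\widehat d(j,\alpha)-d(j,\alpha)|\leq \epsilon$ follows directly from $\cE(\epsilon)$, while the two distances involving $\rho$ are each bounded by $(1+(R-1)/2)\epsilon$ from the induction hypothesis. Substituting gives $\tfrac12\bigl(\epsilon+2(1+(R-1)/2)\epsilon\bigr)=(1+R/2)\epsilon$, closing the induction. The same structural invariant rules out the pathological case where both $i$ and $j$ are freshly upgraded latents, since a bag contains at most one such node at any time.

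The hard part will be pinning down the structural invariant that each bag's non-root population consists exclusively of originally regular nodes. I would verify it by tracing \textsc{basic}.noisy and \textsc{explode}.noisy: each output bag corresponds to a subtree of the current induced tree hanging off a single skeleton node, and that skeleton node is promoted to be the root of the new bag, while the remaining population is inherited intact from the input bag. Starting from the initial bag $W$ (whose nodes are all originally regular) and noting that \textsc{explode}.noisy introduces no latent nodes at all, the invariant propagates across rounds. Once it is in place, the linearity of \eqref{eq:newreg} combined with Lemma~\ref{lem:ud} delivers exactly the $\epsilon/2$-per-round growth stated in the lemma.
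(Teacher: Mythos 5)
Your proof is correct and follows essentially the same route as the paper's: induction on the round number, applying the triangle inequality to the noisy version of \eqref{eq:newreg} with the observation that the non-root node and the query node $\alpha$ are originally regular (error $\le\epsilon$) while only the bag root $\rho$ may carry the inherited $(1+\tfrac{R-1}{2})\epsilon$ error, yielding $\tfrac12(\epsilon+2(1+\tfrac{R-1}{2})\epsilon)=(1+\tfrac{R}{2})\epsilon$. The only difference is that you make explicit the structural invariant (each bag contains at most one promoted latent node, its representative) which the paper uses implicitly; your sketch of its verification via \textsc{basic} and \textsc{explode} is sound.
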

\begin{proof}
Consider the first run of Algorithm~\ref{alg:basicnoisy}. In this case, $\rho(B)$ and $\alpha$ are both regular. If $w_i$ is identified as a latent node, we calculate $\widehat d(w_i,v)$ for all $v\in B_i$ using \eqref{eq:newreg}. By the triangle inequality,  $|\widehat d(w_i,v)- d(w_i,v)|\leq \tfrac32\epsilon$ and this bound is sharp. This establishes the case $R=1$. Suppose the lemma bound holds up to the $(R-1)$-st round. In the $R$-th round, $\rho(B)$ may be a latent node added in the previous call but $\alpha$ is still sampled from the originally regular nodes. If $\rho(B_i)$ is identified as a latent node, we calculate $\widehat d(\rho(B_i),v)$ for all $v\in B_i$. Since $v$ is also among the originally regular nodes, we conclude $|\widehat d(v,\alpha)-d(v,\alpha)|\leq \epsilon$. By induction, $|\widehat d(v,\rho(B))-d(v,\rho(B))|\leq (1+\tfrac{R-1}{2})\epsilon$ and $|\widehat d(\alpha,\rho(B))-d(\alpha,\rho(B))|\leq (1+\tfrac{R-1}{2})\epsilon$. By the triangle inequality,
$$
|\widehat d(\rho(B_i),v)-d(\rho(B_i),v)|\;\leq\;\tfrac12(\epsilon+(1+\tfrac{R-1}{2})\epsilon+(1+\tfrac{R-1}{2})\epsilon)\;=\;(1+\tfrac{R}{2})\epsilon.
$$
The result now follows by induction.
\end{proof}

It is generally easy to show that the event $\mathcal E(\epsilon)$ holds with probability at least $1-\eta$ as long as the sample size $N$ is large enough. Let $\delta=1-e^{-\epsilon}$ and suppose that the following event holds
$$
\mathcal E'(\delta)\;:=\;\left\{ |\widehat\rho_{ij}-\rho_{ij}|\leq |\rho_{ij}|\delta\;\;\mbox{ for all regular }i\neq j\right\}.
$$
Since $\delta<1$, under the event $\mathcal E'$ the signs of $\widehat\rho_{ij}$ and $\rho_{ij}$ are the same. It is easy to see that $\mathcal E'\subseteq \mathcal E$. Indeed, under $\mathcal E'$, $\frac{\hat \rho_{ij}}{\rho_{ij}}>0$ and for all $1-\delta\leq x\leq 1+\delta$ we have $\log(1-\delta)\leq \log(x)\leq \log(1+\delta)$. It follows that 
$$
\left|\log\frac{\hat \rho_{ij}}{\rho_{ij}}\right|\;\leq\;\max\left\{\log(1+\delta),|\log(1-\delta)|\right\}\;=\;-\log(1-\delta)\;=\;\epsilon.
$$
It is then enough to bound the probability of the event $\cE'$. To illustrate how this can be done without going into unnecessary technicalities, suppose $\max_i \E X_i^4 \leq  \kappa$ for some $\kappa > 0$. In this case ${\rm var}(X_i X_j) \leq  \kappa$, and therefore one may use the median-of-means estimator 
(see, e.g., \cite{lugosi2019mean}) to estimate $\rho_{ij} = \E[X_i X_j]$. We get the following result.

\begin{thm}\label{th:scomp}Suppose a random sample of size $N$ is generated from a mean zero distribution with covariance matrix $\Sigma$ satisfying $\Sigma_{ii}=1$ and suppose $\max_i \E X_i^4 \leq  \kappa$ for some $\kappa > 0$. Let $\ell,\mathit{u}$ be as defined in \eqref{eq:LU}.	Fix $\eta\in (0,1)$ and suppose 
$$\epsilon\;\leq\; \frac{\ell}{4(1+\log_\Delta(n))}\qquad\mbox{and}\qquad N\;\geq \; \frac{64\kappa \log(n/\eta)}{ (1-2e^{-\epsilon})}e^{2\mathit u}$$  
then the noisy version of the Algorithm~\ref{alg:tree} correctly recovers the underlying semi-labeled tree with probability $1-\eta$. 
\end{thm}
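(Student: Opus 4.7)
The plan is to combine a median-of-means concentration bound for the correlation estimates with a round-by-round application of Proposition~\ref{prop:noisyasnoiseless}, using Lemma~\ref{lem:bounddist} to control how the noise level grows as latent nodes are promoted to regular ones.

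For the probabilistic part, I would estimate every $\rho_{ij} = \E[X_iX_j]$ by a median-of-means estimator \cite{lugosi2019mean} based on the $N$ observations. Since $\mathrm{var}(X_iX_j) \le \sqrt{\E X_i^4 \cdot \E X_j^4} \le \kappa$ by Cauchy--Schwarz, the standard MoM tail bound provides a sub-Gaussian-type deviation inequality with proxy variance $\kappa$. Choosing the number of blocks proportional to $\log(n/\eta)$ and using $|\rho_{ij}| \ge e^{-u}$ (which follows from the definition of $u$), a union bound over the $\binom{n}{2}$ pairs yields
$$\Pr\!\left\{\max_{i\neq j}\frac{|\widehat\rho_{ij}-\rho_{ij}|}{|\rho_{ij}|}\le \delta\right\}\;\ge\;1-\eta,$$
where $\delta = 1-e^{-\epsilon}$, provided $N$ satisfies the stated lower bound. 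This is exactly the event $\mathcal E'(\delta)$ introduced before the theorem. Since the paper already shows $\mathcal E'(\delta) \subseteq \mathcal E(\epsilon)$, the noise event $\mathcal E(\epsilon)$ on the original distances holds with probability at least $1-\eta$.

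For the deterministic part, I would condition on $\mathcal E(\epsilon)$ and argue by induction on the round index $R$ that the noisy algorithm produces exactly the same bags, representatives, and skeleton as the noiseless one. By Lemma~\ref{lem:bounddist}, in round $R$ the noise on distances between current regular nodes is at most $(1+R/2)\epsilon$. Since Algorithm~\ref{alg:tree} terminates within at most $2\log_\Delta(n)$ rounds (by Theorem~\ref{th:main}), the worst-case effective noise is $(1+\log_\Delta(n))\epsilon$, which by the hypothesis $\epsilon \le \ell/\bigl(4(1+\log_\Delta(n))\bigr)$ is at most $\ell/4$. Therefore Proposition~\ref{prop:noisyasnoiseless}, applied with the enlarged noise parameter $(1+R/2)\epsilon$, guarantees that \textsc{basic}.noisy and \textsc{explode}.noisy reproduce the noiseless bags and skeleton at every round. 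Consequently the final output of the noisy algorithm equals the noiseless output, which is the true tree by Theorem~\ref{th:main}.

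The main obstacle I anticipate is the error propagation in the deterministic step: each latent node identified through \eqref{eq:newreg} carries a distance inherited from noisy measurements of its neighbours, so the effective noise grows linearly with the number of rounds. Lemma~\ref{lem:bounddist} pins this growth at $(1+R/2)\epsilon$, and the hypothesis on $\epsilon$ is calibrated precisely so that the noise in the last round still fits under the $\ell/4$ threshold required by Proposition~\ref{prop:noisyasnoiseless}. Once this budget is balanced, the probabilistic ingredient is essentially routine: the MoM bound converts absolute error into relative error via the $e^{-u}$ floor on $|\rho_{ij}|$, and the union bound produces the logarithmic dependence on $n/\eta$ in the stated sample complexity.
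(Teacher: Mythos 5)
Your proposal is correct and follows essentially the same route as the paper's proof: the median-of-means bound with $\var(X_iX_j)\le\kappa$, the reduction $\mathcal E'(\delta)\subseteq\mathcal E(\epsilon)$ with $\delta=1-e^{-\epsilon}$ and a union bound over the $\binom{n}{2}$ pairs using $|\rho_{ij}|\ge e^{-u}$, and then Lemma~\ref{lem:bounddist} combined with Proposition~\ref{prop:noisyasnoiseless} over the at most $2\log_\Delta(n)$ rounds so that the accumulated noise stays below $\ell/4$. Your explicit tracking of the round-dependent noise level $(1+R/2)\epsilon$ is, if anything, slightly more careful than the paper's wording of the same step.
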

\begin{proof}
Suppose the event $\cE(\epsilon)$ holds. Like in the proof of Theorem~\ref{th:main} we modify the procedure by repeating  \textsc{bigsplit}.noisy until the largest bag is bounded in size by $|B|/\sqrt{\Delta}$. With this modification, the algorithm stops after each round. With our choice of $\epsilon$, by Lemma~\ref{lem:bounddist}, we are guaranteed that all computed distances satisfy $|\widehat d(u,v)-d(u,v)|\leq \epsilon$ in the first $2\log_\Delta(n)$ rounds. By Proposition~\ref{prop:noisyasnoiseless}, all these subsequent calls of \textsc{bigsplit}.noisy return the same answer as \textsc{bigsplit} applied to noiseless distances.  The proof now follows if we can show that, with probability at least $1-\eta$, event $\cE(\epsilon)$ holds. We  show that $\cE'(\delta)$ with $\delta=1-e^{-\epsilon}$ holds, which is a stronger condition. Recall that $\E X_i^2=1$, $\E X_i^4\leq \kappa$ and, in consequence, $\var(X_i X_i)\leq \kappa$. We want to estimate $\E(X_i X_j)=\rho_{ij}$. By Theorem 2 in \cite{lugosi2019mean}, the median-of-means estimator $\widehat\rho_{ij}$ (with an appropriately chosen number of blocks that depends on $\eta$ only) satisfies that with probability at least $1-2\eta/{n \choose 2}$
	$$
	|\widehat\rho_{ij}-\rho_{ij}|\;\leq\;\sqrt{\frac{32\kappa \log({n\choose 2}/{\eta})}{N}}.
	$$ 
	Thus, we get that with probability at least $1-\eta$, all $\widehat\rho_{ij}$ satisfy simultaneously that $|\widehat\rho_{ij}-\rho_{ij}|\leq |\rho_{ij}|\delta$ as long as
	$$
	N\;\geq\;\frac{64\kappa \log(n/\eta)}{\delta^2 \min_{i\neq j}\rho_{ij}^2 }\;=\;\frac{64\kappa \log(n/\eta)}{e^{-2\mathit u} \delta^2}.
	$$
	The inequality $\epsilon<\tfrac{\ell}{4(1+\log_\Delta(n))}$ is equivalent to $\delta<1-e^{-\tfrac{\ell}{4(1+\log_\Delta(n))}}$. Thus, we require
	$$
	N\;\geq\;\frac{64\kappa \log(n/\eta)}{e^{-2\mathit u} (1-2e^{-\tfrac{\ell}{4(1+\log_\Delta(n))}})}.
	$$
\end{proof}

Note that the required sample size $N$ grows exponentially with the diameter $u$ of the tree. While this seems undesirable, in the setup of this section, such a dependence is easily seen to be necessary, as the correlations along any path decrease exponentially, and any estimator of the correlations needs to have accuracy at the same scale.

\begin{funding}
The authors thank the referees for their insightful comments.
Luc Devroye was supported by NSERC grant RGPIN-2024-04164. Piotr Zwiernik and G\'abor Lugosi acknowledge the support of Ayudas Fundaci\'{o}n BBVA a
Proyectos de Investigaci\'{o}n Cient\'{i}fica 2021 and the Spanish Ministry of
Economy and Competitiveness grant PID2022-138268NB-I00, financed by
MCIN/AEI/10.13039/501100011033, FSE+MTM2015-67304-P, and FEDER, EU. Piotr Zwiernik was also supported by NSERC grant RGPIN-2023-03481.\end{funding}


\bibliographystyle{imsart-nameyear}\bibliography{references2.bib}

\end{document}